\title{On the cutoff phenomenon for\\fast diffusion and porous medium equations}
\date{Winter 2025, compiled \today}%
\author{Djalil Chafaï}%
\address[DC]{DMA, École normale supérieure, 45 rue d'Ulm, 75230, Paris; and CEREMADE, Université Paris-Dauphine, Place du Maréchal de Lattre de Tassigny, 75775, Paris; PSL, CNRS}%
\email{djalil@chafai.net}%
\author{Max Fathi}%
\address[MF]{Université Paris Cité and Sorbonne Université, CNRS, Laboratoire Jacques-Louis Lions and Laboratoire de Probabilités, Statistique et Modélisation, F-75013 Paris, France\\
and DMA, École normale supérieure, Université PSL, CNRS, 75005 Paris, France \\
and Institut Universitaire de France}
\email{mfathi@lpsm.paris}%
\author{Nikita Simonov}%
\address[NS]{Sorbonne Université, Université Paris Cité, CNRS, Laboratoire Jacques-Louis Lions, LJLL, F-75005 Paris, France}%
\email{nikita.simonov@sorbonne-universite.fr}
\numberwithin{equation}{section}
\newtheorem{theorem}{Theorem}[section]%
\newtheorem{lemma}[theorem]{Lemma}%
\newtheorem{remark}[theorem]{Remark}%
\begin{document}

\begin{abstract}
  The cutoff phenomenon, conceptualized at the origin for finite Markov chains,
  states that for a parametric family of evolution equations, started from a point, the distance towards a long time
  equilibrium may become more and more abrupt for certain choices of initial conditions, when the parameter tends to infinity. This threshold phenomenon can be seen as a critical competition between trend to equilibrium and worst initial condition. In
  this note, we investigate this phenomenon beyond stochastic processes, in the context of the analysis of nonlinear partial differential
  equations, by proving cutoff for the fast diffusion and porous medium
  Fokker\,--\,Planck equations on the Euclidean space, when the dimension tends to infinity. We formulate the phenomenon using quadratic Wasserstein distance, as well as using specific relative entropy and Fisher information. Our high dimensional
  asymptotic analysis uses the exact solvability of the model involving
  Barenblatt profiles. 
  It includes the Ornstein\,--\,Uhlenbeck dynamics as a special linear case.
\end{abstract}

\subjclass[2020]{
35A23; 
35K15; 
35K61; 
49Q20. 
}

\keywords{Cutoff phenomenon; High dimensional phenomenon; Fast diffusion equation; Porous medium equation; Fokker\,--\,Planck equation; Optimal transportation; Functional inequalities}

\maketitle

{\footnotesize\tableofcontents}

\section{Introduction}

\subsection{The cutoff phenomenon}

The cutoff phenomenon is a property of abrupt convergence to the equilibrium that occurs for many families of Markov processes and more generally dynamical systems. It was first highlighted by David Aldous, Persi Diaconis, and Mehrdad Shahshahani in the context of random walks on groups \cite{Aldous83, Diaconis96}, but has appeared in many other contexts since then, including, for instance, interacting particle systems such as exclusion processes \cite{Lacoin16}, Brownian motion on spheres, projective spaces, and on compact Lie groups \cite{Meliot14,MR1306030}, random walks on graphs \cite{LuPe16}, spin systems in statistical physics \cite{LuSl13}, and dynamical systems \cite{caputo-labbe-lacoin}. In a nutshell, a sequence of Markov processes $({(X_t^n)}_{t\geq0})$ with invariant probability measures $(\pi^n)$ satisfies a cutoff at time $t_n$, and with respect to some distance-like quantity ``$\mathrm{dist}$'' on the space of probability measures, when for any $\epsilon > 0$, 
\[
\varliminf_{n\to\infty} 
\sup_{x_0\in S_n}\mathrm{dist}(\mathrm{Law}(X^n_{(1-\epsilon)t_n}), \pi^n) > 0
\quad\text{and}\quad
\varlimsup_{n\to\infty} 
\sup_{x_0\in S_n}\mathrm{dist}(\mathrm{Law}(X^n_{(1+\epsilon)t_n}), \pi^n) = 0
\]
where $S_n$ is a well chosen set of initial conditions. Cutoff is most commonly studied for the total variation distance, but many other distances or divergences are of interest : Hellinger distance, $L^p$ distances, Wasserstein distances, relative entropy, Fisher information, \ldots In most examples, the lower bound at times $(1-\epsilon)t_n$ is not just strictly positive, but actually as large as possible : $+\infty$ for unbounded distances such as Wasserstein distances, $1$ for total variation distance. 

The goal of this work is to show that the cutoff phenomenon also takes place in the context of \emph{nonlinear} evolution equations, beyond the dynamics of Markov processes. More precisely, we establish the cutoff phenomenon for a family of nonlinear parabolic PDE : porous medium and fast-diffusion equations, with drift, in high dimension. Our interest is conceptual, showing that certain ideas coming from probability theory are also relevant in nonlinear analysis, but also specific, since understanding the precise convergence to equilibrium for these PDE is of independent interest. 

\subsection{Fast diffusion, porous medium, Barenblatt profile, Fokker\texorpdfstring{\,--\,}{--}Planck}
This note is about the following nonlinear partial differential evolution equation in $\mathbb{R}^d$, $d\geq1$,
for a parameter $m>0$ :
\begin{equation}\label{eq:FDE}
    \partial_t u
    =\Delta(u^m)
    =\mathrm{div}(mu^{m-1}\nabla u)
   ,\quad t>0,x\in\mathbb{R}^d,
\end{equation}
with a Dirac mass initial condition $u(t=0,\cdot)=\delta_{x_0}$, $x_0\in\mathbb{R}^d$. 
It is a reaction-diffusion equation :
\begin{equation}
\Delta(u^m)=m u^{m-1} \Delta u+m (m-1) u^{m-2} |\nabla u|^2, 
\end{equation}
where $|x|^2=x_1^2+\cdots+x_d^2$ is the squared Euclidean norm. The equation is exactly solvable. Indeed, for $d\geq2$ and $m>\frac{d-2}{d}$, the evolution preserves positivity and mass and admits the unique solution 
\begin{equation}\label{eq:sol}
    u(t,x)=\frac{1}{t^{\alpha d}}B\Bigr(\frac{x-x_0}{t^\alpha}\Bigr),\quad t>0,x\in\mathbb{R}^d,
\end{equation}
where 
\begin{equation}\label{eq:B}
    B(x):={\Bigr(c+\alpha\frac{1-m}{2m}|x|^2\Bigr)_+^{\frac{1}{m-1}}},
    \quad
    \alpha:=\frac{1}{2-d(1-m)}
    =\frac{1}{d(m-\frac{d-2}{d})}>0.
\end{equation}
The constant $c=c_{d,m}>0$ is chosen is such a way that  function $B$, often called the Barenblatt or Barenblatt\,--\,Pattle profile in the analysis of PDE literature, due to \cite{MR0046217,MR0114505}, is a probability density function. Actually the Barenblatt profile matches classical families of probability distributions, see Remarks \ref{rk:student} and \ref{rk:spherical} below. The equation \eqref{eq:FDE} is strongly connected to Sobolev type inequalities. We refer to \cite{MR1940370,MR1986060,bonforte:hal-02887010,MR2286292,zbMATH05058450} for the analysis of PDE point of view, and to \cite{demange-thesis,MR2178944,MR2381156} for more geometric and probabilistic aspects. For stochastic processes associated to \eqref{eq:FDE}, we refer to \cite{MR1027932,MR1469575,MR2421181}. 

There are three main regimes depending of the value of the parameter $m$:
\begin{itemize}
\item Case $\frac{d-2}{d}<m<1$. The PDE \eqref{eq:FDE} is known as the fast
  diffusion equation. Since $1-m>0$, the Barenblatt profile $B$ has a negative
  power, has full space support, is heavy tailed, and is nothing else but a
  multivariate Student t-distribution, see Remark \ref{rk:student}.
  It has a finite second moment if and only if
  $m>\frac{d}{d+2}$. Moreover the case $m=\frac{d-1}{d}$ has remarkable
  properties, in particular it gives $\frac{1}{1-m}=d$. Some useful formulas
  shall be given in Lemma \ref{le:Bm<1}.
  \item Case $m=1$. The PDE \eqref{eq:FDE} is the heat or diffusion equation,
    it is linear. The Barenblatt profile $B$ is Gaussian (heat kernel), see
    Section \ref{ss:difflim} for formulas when $m=1$ and $m\to1$.
  \item Case $m>1$. The PDE \eqref{eq:FDE} is known as the porous medium
    equation. Since $m-1>0$, the Barenblatt profile has a positive power, but
    a compact support, related to a finite speed of propagation for the
    evolution equation. The Barenblatt profile is then nothing else but a sort
    of symmetric multivariate or radial Beta distribution supported on a
    sphere of $\mathbb{R}^d$ :
    \begin{equation}
    c^{\frac{1}{m-1}}
    (1-b|x|)^{\frac{1}{m-1}}(1+b|x|)^{\frac{1}{m-1}}    
    \mathbf{1}_{|x|\leq\frac{1}{b}},\quad x\in\mathbb{R}^d,\quad
    b:=\sqrt{\alpha\frac{1-m}{m}c}.
  \end{equation}
  This also appears as the law of the projection of the uniform law on
  spheres, see Remark \ref{rk:spherical}. Some useful formulas are given by
  Lemma \ref{le:Bm>1}. These are not Dirichlet distributions.
\end{itemize}   

Following \cite[eq.~(18)]{MR1940370}, we consider now the Fokker\,--\,Planck version of \eqref{eq:FDE} given by
\begin{equation}\label{eq:FDEFP}
    \partial_tv=\Delta(v^m)+\mathrm{div}(xv),\quad t>0, x\in\mathbb{R}^d,
\end{equation}
with Dirac initial data $v(t=0,\cdot)=\delta_{x_0}$, $x_0\in\mathbb{R}^d$. Its solution is related to the solution $u$ of \eqref{eq:FDE} via
\begin{equation}\label{eq:uv}
    u(t,x)=R(t)^{-d}v\Bigr(\tau(t),\frac{x}{R(t)}\Bigr)
\end{equation}
where $\tau(t):=\log R(t)$ and $R$ is the solution of the ODE $\dot{R}=R^{\frac{\alpha-1}{\alpha}}$, $R(0)=1$, given by
$R(t):=\big(1+\frac{t}{\alpha}\bigr)^\alpha$. In other words, denoting $R_\tau:=R\circ\tau^{-1}$, we have $v(t,x)=(R_\tau(t))^du(\tau^{-1}(t),R_\tau(t)x))$. But since $\tau(t)=\alpha\log(1+\frac{t}{\alpha})$, we get $\tau^{-1}(t)=\alpha(\mathrm{e}^{\frac{t}{\alpha}}-1)$, 
$(R_\tau)(t)=\mathrm{e}^t$, and thus, by \eqref{eq:uv} and  \eqref{eq:B},
\begin{equation}\label{eq:FDEFPv}
v(t,x)=\mathrm{e}^{dt}u(\alpha(\mathrm{e}^{\frac{t}{\alpha}}-1),\mathrm{e}^tx)
  =\frac{1}{(\alpha(1-\mathrm{e}^{-\frac{t}{\alpha}}))^{\alpha d}}
  B\Bigr(\frac{x-\mathrm{e}^{-t}x_0}{(\alpha(1-\mathrm{e}^{-\frac{t}{\alpha}}))^\alpha}\Bigr).
\end{equation}
We get immediately from \eqref{eq:FDEFPv} the long time behavior of the solution $v$: 
\begin{equation}\label{eq:vinf}
     v_\infty(x):=\lim_{t\to\infty}v(t,x)
    =\frac{1}{\alpha^{\alpha d}}B\Big(\frac{x}{\alpha^\alpha}\Bigr)=\left(C+\frac{1-m}{2m}|x|^2\right)_{+}^\frac{1}{m-1}
\end{equation}
($C$ is not $c$). The long time limit no longer depends on the initial condition $x_0$. For a
finer study of this long time behavior, we remark that $v$ given by
\eqref{eq:FDEFPv} and it particular its long time limit remains in the
position-scale family of the Barenblatt profile $B$. This allows to use
formulas for certain distances within this family, in the same spirit as
\cite[Lemma A.5]{MR4528974}, see also \cite[Lemma 3 and 4]{MR2246356}.

\subsection{Distances and divergences, and main result on the cutoff phenomenon}

Recall that if $\mu$ and $\nu$ are probability measures on $\mathbb{R}^d$ with finite second moment, the Monge\,--\,Kantorovich\,--\,Wasserstein quadratic transportation distance $\mathrm{W}_2(\mu,\nu)$ is given by
\begin{equation}
\mathrm{W}_2(\mu,\nu):=
\sqrt{\inf_\pi\int|x-y|^2\mathrm{d}\pi(x,y)}
\end{equation}
where the infimum runs over the set of probability measures on $\mathbb{R}^d\times\mathbb{R}^d$ with marginals $\mu$ and $\nu$. In the same probabilistic functional analytic spirit, and following for instance \cite[Chap.~2]{bonforte:hal-02887010}, 
the free energy or relative entropy is given by
\begin{equation}\label{eq:tsalis}
    \mathrm{H}_m(f\mid v_\infty):=
    \frac{1}{m-1}\int\left(f(x)^m-v_\infty(x)^m-\frac{1-m}{2}|x|^2(f(x)-v_\infty(x))\right)\mathrm{d}x.
\end{equation}
Similarly, the Fisher information or entropy production along this dynamics is
\begin{equation}\label{eq:fisher}
    \mathrm{I}_m(f\mid v_\infty):=\int f\Bigr|x+\frac{m}{m-1}\nabla f^{m-1}\Bigr|^2\mathrm{d}x.
\end{equation}
The relative entropy and the Fisher information satisfy the following equation along solutions to~\eqref{eq:FDEFP}
\begin{equation}
\frac{\mathrm{d}}{\mathrm{d}t}
\mathrm{H}_m(v(t)\mid v_\infty) ) = - \mathrm{I}_m(v(t)\mid v_\infty)\,.
\end{equation}
Note that when $m<1$, or when $m>1$ and the support of $f$ is included in the one of $v_\infty$, then $\mathrm{H}_m(f\mid v_\infty)$ can be written as the Bregman\,--\,Rényi\,--\,Tsallis divergence associated to~\eqref{eq:FDEFP}:
\begin{equation}\label{eq:tsalisbis}
    \mathrm{H}_m(f\mid v_\infty)=
    \frac{1}{m-1}\int\left(f(x)^m-v_\infty(x)^m-m\,v_\infty^{m-1}(x)(f(x)-v_\infty(x))\right)\mathrm{d}x.
\end{equation}
The function $x\mapsto\frac{x^m-1}{m-1}$ is strictly convex for $m\neq1$, and gives the classical logarithmic relative entropy as $m\to1$. 
When $f$ is a location-scale transformation of the Barenblatt profile $v_\infty$, the expression \eqref{eq:tsalis} is fully computable, while 
\eqref{eq:tsalisbis} is not when $m>1$ due to the cross term
$v_\infty^{m-1}f$.

Our main results below state that in high dimension, the distance of the solution to the equilibrium collapses abruptly at a critical time. We call this threshold phenomenon the \emph{cutoff phenomenon}. It is a nonlinear generalization of the same phenomenon for the Ornstein\,--\,Uhlenbeck process stated in \cite[Theorem 1.1]{MR4528974} see also \cite{MR2260742,MR2203823,MR3770869}. We refer to \cite{MR1306030,MR2375599} for more on the cutoff phenomenon for linear evolution equations associated to Markov processes, and to \cite{chafathi,Salez25} for recent advances for positively curved linear Markov diffusions.

Let $v(t,\cdot)$ be the solution of the PDE \eqref{eq:FDEFP} in $\mathbb{R}^d$, $d\geq3$, $m>\frac{d-2}{d}$, given by \eqref{eq:FDEFPv}, with point initial condition $x_0\in\mathbb{R}^d$. Let $v(t,\infty)$ be as in \eqref{eq:vinf}, associated to the Barenblatt profile $B$ in \eqref{eq:B}.

We assume that $d\geq3$ because when $\frac{d-2}{d}<m\leq\frac{d}{d+2}$, which is the case for $d=2$, the $\mathrm{W}_2$ distance is infinite. But this constraint on $d$ is not relevant here since we are interested in the regime $d\to\infty$. The main advantage of using this distance is the explicit formula that it gives for position-scale families, that boils down to an explicit (second) moment computation for the Barenblatt profile.

\begin{theorem}[Cutoff phenomenon]\label{th:main}
 Let $\alpha\in(0,1/2)\cup(1/2,+\infty)$ and $r > 0$, and set $m=\frac{(d-2)\alpha +1}{d\alpha}$.\\ 
 Then for all $\mathrm{dist}(\cdot,\cdot)\in\{\mathrm{W}_2^2(\cdot,\cdot),\mathrm{H}_m(\cdot\mid\cdot),\mathrm{I}_m(\cdot\mid\cdot)\}$, and for all $\varepsilon>0$,
  \begin{equation}\label{eq:m>1:W2}
    \lim_{d\to\infty}
    \sup_{|x_0| \leq r\sqrt{d}}\mathrm{dist}(v(t_d,\cdot),v_\infty)
    =\begin{cases}
    +\infty & \text{if } t_d=(1-\varepsilon)\frac{\max(1,\alpha)}2\log(d)\\
    0 & \text{if } t_d=(1+\varepsilon)\frac{\max(1,\alpha)}2\log(d)\\
    \end{cases}.  
  \end{equation}
  In particular, the cutoff critical time is of order $\log(d)$. 
\end{theorem}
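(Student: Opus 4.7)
The plan is to exploit the exact solvability and the location--scale structure already visible in \eqref{eq:FDEFPv}. Writing
\begin{equation*}
v(t,x) = s(t)^{-d} B\!\left(\frac{x - \mu(t)}{s(t)}\right), \qquad v_\infty(x) = s_\infty^{-d} B\!\left(\frac{x}{s_\infty}\right),
\end{equation*}
with $\mu(t) := e^{-t} x_0$, $s(t) := (\alpha(1 - e^{-t/\alpha}))^\alpha$ and $s_\infty := \alpha^\alpha$, both $v(t,\cdot)$ and $v_\infty$ are affine pushforwards of the radially symmetric Barenblatt profile $B$. Each of the three divergences therefore reduces to an explicit function of $(\mu(t), s(t))$ and of a few moments of $B$, chiefly $\sigma_d^2 := \int |x|^2 B(x)\,\mathrm{d}x$, whose scaling as $d \to \infty$ will drive the analysis.

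\textbf{Step 1: closed formulas.} For $\mathrm{W}_2^2$, the affine map $T(x) := (s_\infty/s(t))(x - \mu(t))$ is the gradient of a strictly convex quadratic and transports $v(t)$ onto $v_\infty$, hence is Brenier-optimal; a direct second-moment computation yields $\mathrm{W}_2^2(v(t), v_\infty) = |\mu(t)|^2 + (s(t) - s_\infty)^2 \sigma_d^2$. For $\mathrm{I}_m$, the algebraic identity $B^{m-1}(y) = c + \alpha\tfrac{1-m}{2m}|y|^2$ together with $\alpha(d(m-1)+2) = 1$ (which follows from the definition of $\alpha$) gives
\begin{equation*}
x + \tfrac{m}{m-1}\nabla v(t,x)^{m-1} = (1 - k(t))\, x + k(t)\,\mu(t), \qquad k(t) = \frac{1}{1 - e^{-t/\alpha}},
\end{equation*}
and integrating against $v(t)$ produces $\mathrm{I}_m(v(t)\mid v_\infty) = |\mu(t)|^2 + (1-k(t))^2 s(t)^2 \sigma_d^2$. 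For $\mathrm{H}_m$, the substitution $y = (x-\mu(t))/s(t)$ gives $\mathrm{H}_m(v(t)\mid v_\infty) = F(s(t)) + \tfrac{1}{2}|\mu(t)|^2$ for an explicit scalar function $F$ with $F(s_\infty) = 0$; since $v_\infty$ minimizes $\mathrm{H}_m(\cdot\mid v_\infty)$, also $F'(s_\infty) = 0$, yielding the moment identity $\sigma_d^2 = \tfrac{d}{\alpha}\int B^m$, and then $F''(s_\infty) = \tfrac{d}{\alpha^2}\int B^m > 0$, providing the quadratic expansion $F(s) = \tfrac{1}{2}F''(s_\infty)(s-s_\infty)^2 + O((s-s_\infty)^3)$.

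\textbf{Step 2: high-dimensional asymptotics.} The definition gives $m = 1 - \frac{2\alpha-1}{d\alpha} \to 1$ as $d \to \infty$ at fixed $\alpha$, so using Lemmas~\ref{le:Bm<1}--\ref{le:Bm>1} one obtains $\int B^m \to 1$ and $\sigma_d^2 \sim d/\alpha$. As $t \to \infty$ one has $s(t) - s_\infty \sim -\alpha^{\alpha+1} e^{-t/\alpha}$ and $1 - k(t) \sim -e^{-t/\alpha}$, so each of the three divergences has the large-$t$ form
\begin{equation*}
\mathrm{dist}(v(t), v_\infty) \;=\; c_1\, e^{-2t}|x_0|^2 \;+\; c_2(\alpha)\, d\, e^{-2t/\alpha} \;+\; \text{(lower order)},
\end{equation*}
with $c_1 > 0$ and $c_2(\alpha) > 0$ independent of $d$ and $x_0$. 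The supremum of the location term over $|x_0| \leq r\sqrt{d}$ is of order $d\,e^{-2t}$, with critical time $\tfrac{1}{2}\log d$, while the scale term has critical time $\tfrac{\alpha}{2}\log d$; the cutoff time is thus their maximum, $\tfrac{\max(1,\alpha)}{2}\log d$.

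\textbf{Step 3: conclusion and main obstacle.} For $t_d = (1+\varepsilon)\tfrac{\max(1,\alpha)}{2}\log d$, both exponentials decay faster than $1/d$, so the supremum vanishes. For $t_d = (1-\varepsilon)\tfrac{\max(1,\alpha)}{2}\log d$, one of the two terms blows up: choose $|x_0| = r\sqrt{d}$ to force divergence of the location term when $\alpha \leq 1$; when $\alpha > 1$ the scale term already diverges for any fixed $x_0$. The main obstacle I anticipate is making the expansion of $F$ quantitative uniformly in $d$: the cubic remainder has coefficients depending on $d$ through $m$, and one must verify via the explicit formulas of Lemmas~\ref{le:Bm<1}--\ref{le:Bm>1} that it remains subdominant across the relevant time window. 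The excluded value $\alpha = 1/2$ corresponds to $m = 1$, where $d(1-m) = 0$ and the nonlinear part of $F$ degenerates; that linear case is the Ornstein--Uhlenbeck cutoff of \cite{MR4528974}.
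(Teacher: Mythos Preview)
Your approach is correct and coincides with the paper's: exploit the location--scale structure of \eqref{eq:FDEFPv} to reduce each of $\mathrm{W}_2^2$, $\mathrm{H}_m$, $\mathrm{I}_m$ to an explicit function of $(\mu(t),s(t))$ and the moments $M_2$, $N_m$, then use the Barenblatt moment asymptotics of Lemmas~\ref{le:Bm<1} and~\ref{le:Bm>1} to identify the two competing terms $d\,e^{-2t}$ and $d\,e^{-2t/\alpha}$.

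Two small remarks. First, your variational argument for $\mathrm{H}_m$ is actually sharper than the paper's: the identity $F'(s_\infty)=0$ from minimality of $v_\infty$ gives $M_2 = d N_m$ \emph{exactly}, whereas the paper only records $M_2 - dN_m = O_{d\to\infty}(1)$ via direct estimation (see~\eqref{difference-fde}). This also dissolves your ``main obstacle'': since $F$ is an explicit two-term function (a power plus a quadratic in the scale variable), you can bypass Taylor expansion entirely and expand $a(t)=(1-e^{-t/\alpha})^\alpha$ directly to second order, exactly as the paper does, with no uniformity issue. Second, your stated limits $\int B^m \to 1$ and $\sigma_d^2 \sim d/\alpha$ are off by the $\alpha$-dependent factor $(2\pi e/\alpha)^{2\alpha-1}$ (compare \eqref{eq:M2sim} and \eqref{limit-alpha-constant-pme}, which are written for $v_\infty$ rather than the $B$ of~\eqref{eq:B}); this does not affect the cutoff time, only the constant $c_2(\alpha)$ in your final display.
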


We emphasize that both $v$, $v_\infty$, $x_0$ and $\mathrm{dist}$ depend on the dimension $d$. We discuss the motivation behind choice of the set of initial data $\{|x_0| \leq r\sqrt{d}\}$ in Section \ref{sec:initialdata} below, and we shall discuss what happens for other choices there. Note that the value of $r$ plays no role. 

Note that $m=\frac{(d-2)\alpha+1}{d\alpha}>\frac{d-2}{d}$ for $d\geq2$.

For $\alpha=1/2$, we get the linear Ornstein\,--\,Uhlenbeck dynamics, see Section \ref{ss:difflim}, and we recover here the cutoff obtained in \cite{MR4528974}. 

For $\alpha=1$, we get $m=\frac{d-1}{d}$, an important exponent related to the equality case in Sobolev type functional inequalities, see~\cite{bonforte:hal-02887010}. In particular, as $d\to\infty$, from the Sobolev inequality one get a logarithmic Sobolev inequality, see for instance~\cite{arXiv:2209.08651}, which motivates in part this research.  The motivation for considering the limit $\ d\to\infty $ along curves $ d\mapsto m$ with constant $ \alpha $ arises from the analysis of the spectrum of the linearized operator around the Barenblatt profile. The spectrum, computed in~\cite{zbMATH06133917}, is determined by the value of $ m $ and consists of an essential part along with discrete eigenvalues. Notably, when $ m \geq \frac{d-1}{d} $ (which eventually holds as $ d\to\infty $ with $ \alpha $ fixed), the first eigenvalue is zero, the second remains constant (independent of $m$), and the third depends on $\alpha$
 . In the porous medium case $m>1$ or equivalently $0<\alpha<\frac{1}{2}$, to the best of our knowledge, there is no counterpart to the exponent $m=\frac{d-1}{d}$, and the spectrum of the linearized operator is not known.

Instead of fixing the value of $\alpha$, we can instead fix the value of $m$. In that regime, we have

\begin{theorem}[Cutoff phenomenon for fixed exponent]\label{th:fixed_exp}
 Let $m>1$ and $r > 0$, and set $\alpha:=\frac{1}{2-d(1-m)}$.
 Then for all $\mathrm{dist}(\cdot,\cdot)\in\{\mathrm{W}_2^2(\cdot,\cdot),\mathrm{H}_m(\cdot\mid\cdot),\mathrm{I}_m(\cdot\mid\cdot)\}$, and for all $\varepsilon>0$,
  \begin{equation}\label{eq:mfixed:W2}
    \lim_{d\to\infty}
    \sup_{|x_0| \leq r\sqrt{d}}\mathrm{dist}(v(t_d,\cdot),v_\infty)
    =\begin{cases}
    +\infty & \text{if } t_d=(1-\varepsilon)\frac12\log(d)\\
    0 & \text{if } t_d=(1+\varepsilon)\frac12\log(d)\\
    \end{cases}.  
  \end{equation}
  In particular, the cutoff critical time is of order $\log(d)$. 
\end{theorem}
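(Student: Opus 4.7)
The plan is to observe that in the regime where $m>1$ is fixed and $d\to\infty$, the corresponding $\alpha=\alpha_d=\frac{1}{2+d(m-1)}$ tends to $0$, so that $\max(1,\alpha_d)=1$ and the prescribed cutoff time $\tfrac12\log d$ coincides with the one predicted by Theorem~\ref{th:main} in the regime $\alpha<1/2$. Since Theorem~\ref{th:main} holds for fixed $\alpha$ while here $\alpha$ depends on $d$, I will follow the same position-scale strategy but verify that the asymptotic estimates remain uniform as $\alpha_d\to 0$.

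The engine of the proof is the fact that both $v(t,\cdot)$ and $v_\infty$ sit in the position-scale family generated by the Barenblatt profile $B$, with translations $\mathrm{e}^{-t}x_0$ and $0$, and scales $s_1(t):=(\alpha(1-\mathrm{e}^{-t/\alpha}))^\alpha$ and $s_2:=\alpha^\alpha$. Radial symmetry of $B$ yields the closed form
\begin{equation*}
\mathrm{W}_2^2(v(t,\cdot),v_\infty)=|\mathrm{e}^{-t}x_0|^2+(s_1(t)-s_2)^2\,M_2,\qquad M_2:=\int|y|^2 B(y)\,\mathrm{d}y,
\end{equation*}
and, by substituting $v(t,x)=s_1^{-d}B((x-\mathrm{e}^{-t}x_0)/s_1)$ into \eqref{eq:tsalis}--\eqref{eq:fisher} and using the identity $\alpha\,d(1-m)=2\alpha-1$, analogous closed expressions for $\mathrm{H}_m(v(t,\cdot)\mid v_\infty)$ and $\mathrm{I}_m(v(t,\cdot)\mid v_\infty)$. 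In every case the formula splits into a \emph{location term} proportional to $|\mathrm{e}^{-t}x_0|^2$ and \emph{scale terms} built from $(s_1(t)-s_2)^2$, $s_1(t)^{d(1-m)}-s_2^{d(1-m)}$ and similar expressions, all of which carry a factor $\mathrm{e}^{-t/\alpha}$.

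The cutoff then follows by analysing the two kinds of terms separately. The location term satisfies $|\mathrm{e}^{-t_d}x_0|^2\leq r^2 d^{\mp\varepsilon}$ for $|x_0|\leq r\sqrt{d}$ and $t_d=\frac{1\pm\varepsilon}{2}\log d$; it tends to $0$ in the $(1+\varepsilon)$ regime (upper bound, uniformly over the ball), and, upon choosing $|x_0|=r\sqrt{d}$ to saturate the supremum, diverges like $r^2d^\varepsilon$ in the $(1-\varepsilon)$ regime (lower bound). The scale terms are controlled via $t_d/\alpha_d\sim \tfrac12 d(m-1)\log d\to\infty$, so that $\mathrm{e}^{-t_d/\alpha_d}$ is doubly-exponentially small in $d$ and absorbs any polynomial-in-$d$ growth of the Barenblatt moments $M_2$, $\int B^m$, and their Fisher analogues. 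Consequently, in the three distances, only the location term contributes asymptotically, and the resulting dichotomy is independent of the choice of $\mathrm{dist}$.

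The main obstacle I anticipate is the uniform control of the Barenblatt moments $M_2$, $\int B^m$ and of the Fisher-information integrals as $d\to\infty$, since $B=B_{d,m}$ itself depends on $d$ through the normalisation $c_{d,m}$. This should reduce to Beta-function identities combined with Stirling, exploiting that in the porous medium regime $B$ has compact support of radius $O(\sqrt{d})$, yielding polynomial-in-$d$ bounds that are overwhelmed by the doubly-exponential decay $\mathrm{e}^{-t/\alpha}$ of the scale factors. A secondary but important point is to merge the lower bound (achieved by the specific choice $x_0=r\sqrt{d}\,e_1$) with the simultaneous upper bound over the supremum on the whole ball $\{|x_0|\leq r\sqrt{d}\}$ while keeping the estimates uniform in the sequence $\alpha_d\to 0$.
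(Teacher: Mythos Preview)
Your approach is essentially the same as the paper's: exploit the position-scale structure to obtain closed formulas for $\mathrm{W}_2^2$, $\mathrm{H}_m$ and $\mathrm{I}_m$, split each into a location term $|x_0|^2\mathrm{e}^{-2t}$ and scale terms carrying a factor $\mathrm{e}^{-t/\alpha}$, and observe that because $\alpha_d\to 0$ the scale terms are negligible, leaving the location term to produce cutoff at $\tfrac12\log d$. This matches the paper's argument, which also invokes the exact formula \eqref{eq:W2_exact_mgeq1}, the moment asymptotics $M_2\sim d/(2\pi\mathrm{e})$ of Lemma~\ref{lem:fixed.m}, and the dominance of $|x_0|^2\mathrm{e}^{-2t}$. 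One small correction: $\mathrm{e}^{-t_d/\alpha_d}$ is not doubly exponentially small but rather of order $d^{-c\,d}$ (since $t_d/\alpha_d\sim \tfrac12(m-1)d\log d$); this is still super-polynomial and suffices to absorb the polynomial growth of the Barenblatt moments, so the argument goes through unchanged.
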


Note that the prefactor in the mixing time does not depend on $\alpha$ anymore, since fixing $m$ forces $\alpha$ to go to zero. We cannot fix $m < 1$, since as we let $d$ go to infinity eventually the constraint $m > (d-2)/d$ would cease to hold. 

\subsection{Further comments}

More about Barenblatt profiles can be found for instance in \cite{MR1426127,MR0046217,bonforte:hal-02887010}.

\subsubsection{Linearization}

The regime $m=\frac{d-1}{d}\to1^-$ ($\alpha=1$), differs from the regime $m=1$ ($\alpha=\frac{1}{2}$). The latter is up to a scaling the one in \cite[Th.~1.1]{MR4528974} for Gaussians. The fast diffusion equation becomes linear in the high dimensional limit $d\to\infty$ when $m=\frac{d-1}{d}\to1^-$, as well as in the case $m=1$ and fixed $d$, and we recover in both cases an Ornstein\,--\,Uhlenbeck dynamics, see Section \ref{ss:difflim}. 

\subsubsection{Linear case or diffusion limit}\label{ss:difflim}
At fixed $d$, when $m=1$ or $m\to1$ we get from \eqref{eq:B}
\begin{equation}
    \alpha=\tfrac{1}{2},\quad B(x)\to(4\pi)^{-\frac{d}{2}}\mathrm{e}^{-\frac{|x|^2}{4}},
\end{equation} in other words we recover in this case the Gaussian heat kernel 
\begin{equation}\label{eq:heatkernel}
    u(t,x)=(4\pi t)^{-\frac{d}{2}}\mathrm{e}^{-\frac{|x-x_0|^2}{4t}},
\end{equation}while \eqref{eq:FDE} becomes the heat equation $\partial_tu=\Delta u$ with Dirac initial data $u(t=0,\cdot)=\delta_{x_0}$.
Similarly, when $m=1$ or $m\to1$, the PDE \eqref{eq:FDEFP} becomes the Fokker\,--\,Planck equation of the standard Ornstein\,--\,Uhlenbeck dynamics,  $\alpha=\frac{1}{2}$, and we recover from \eqref{eq:FDEFPv} and \eqref{eq:heatkernel} the Mehler formula 
\begin{align}
  v(t,x)
  =(2\pi(1-\mathrm{e}^{-2t}))^{-\frac{d}{2}}\mathrm{e}^{-\frac{|x-\mathrm{e}^{-t}x_0|^2}{2(1-\mathrm{e}^{-2t})}},
\end{align}
in other words the density of the Gaussian distribution $\mathcal{N}(\mathrm{e}^{-t}x_0,(1-\mathrm{e}^{-2t})\mathrm{id}_d)$.

\subsubsection{Choice of initial data} \label{sec:initialdata} 
We highlighted a particular order of magnitude for the initial data, by taking balls of radius of order $\sqrt{d}$. This choice is because at equilibrium the first moment is also of order $\sqrt{d}$, and hence this order of magnitude is typical at equilibrium. One could easily adapt the results to other polynomial growth rates for the radius (as a function of the dimension). In Theorem \ref{th:fixed_exp} the mixing time would become $\theta \log d$ if the radius scales like $d^{\theta}$. In Theorem \ref{th:main} it would be $\max(\alpha/2, \theta)\log d$. 


\subsubsection{Cutoff universality and functional inequalities} 

As already observed in \cite{MR4528974}, it is striking to see how the different distances and divergences match, producing the same critical time for the cutoff, and suggesting the use of functional inequalities : Sobolev type inequality, Talagrand transportation inequality, etc. This is indeed the case for linear diffusions, see \cite{chafathi}, as well as for our nonlinear diffusions, see \cite[Proposition 5.1 and Theorem 5.2]{MR2824568}. 

\subsubsection{Further developments}

For further development, among the numerous open questions, we can mention the non-Dirac initial conditions, the case of the sphere and the image on the Euclidean space by the stereographical projection of the equation $\partial_t u=\Delta(u^m)$, the case of the $p$-Laplacian.
In particular, let us mention that on the sphere of $\mathbb{R}^d$, and for the case $m=\frac{d-1}{d}$, we have the following analogue of the Barenblatt profile, discovered by Jérôme Demange \cite[p.~597]{MR2381156} :
\begin{equation}
    u(t,x)=\Bigr(\frac{\sinh((d-1)t)}{\cosh((d-1)t)-\langle x_0,x\rangle}\Bigr)^d.
\end{equation}

\subsubsection{About methods for cutoffs}

Historically, the first diffusions for which a cutoff phenomenon was established were Brownian motion (heat equation) on compact manifolds with specific symmetries, such as spheres, tori, projective spaces, and classical Lie groups, by using $L^2$ decomposition, see \cite{MR1306030}, and \cite{Meliot14} for further extensions. The symmetries have the advantage to produce exact solvability (explicit formulas).
Regarding non compact spaces, the cutoff for Ornstein\,--\,Uhlenbeck type diffusions (Fokker\,--\,Planck equation) was analyzed using exact formulas or tensorization, see  \cite{MR2260742,MR2203823,MR3770869}. Beyond such exactly solvable situations, and until very recently, the upper and lower bound for cutoff were analyzed separately. Proving sharp lower bounds is often done with more model-specific techniques, and require identifying the worst possible initial data. For the upper bound, it is often enough to get sharp rates of convergence to equilibrium for general initial data, which can be done using coupling techniques, or functional inequalities such as log-Sobolev and Nash inequalities, see for instance \cite{MR1306030} and \cite{chafathi}. Spectral approaches are easiest to implement when proving $L^2$ cutoff \cite{MR2584746}, and $L^p$ cutoff can be deduced from $L^2$ cutoff using the Riesz\,--\,Thorin interpolation theorem \cite{MR2375599}. However, this does not extend to $L^1$ cutoff, and does not apply in the present nonlinear setting. Recently, the case $p=1$ (total variation) was addressed in \cite{Salez25} for non-negatively curved linear diffusions using a new approach based on varentropy, which does not require a separate study of upper and lower bound. The approaches in \cite{MR2375599} and \cite{Salez25} do not provide the value of the critical time. In another direction, the cutoff and the critical time for various distances for positively curved linear diffusions is considered in the recent work \cite{chafathi} under a spectral gap constraint, which goes beyond \cite{MR4528974}. These recent advances involve classical tools from probabilistic functional analysis such as Bakry\,--\,Émery Gamma calculus and functional inequalities. These techniques are well-established in the analysis of fast diffusion and porous medium equations \cite{zbMATH06133917}, so it would be very interesting to explore their usage for the cutoff.

We emphasize that the cutoff phenomenon can be studied with respect to other parameters than the dimension, and more generally for any family of evolution equations with a unique steady state. 

\subsection*{Acknowledgments}
This work has been supported by the Project CONVIVIALITY ANR-23-CE40-0003 of the French National Research Agency.    

\section{Useful lemmas about Wasserstein distance}

The following lemmas can be of independent interest, beyond their key role for our purposes.

\begin{lemma}[Wasserstein for position-scale transformation]\label{le:wasposca}
  Let $\mu$ and $\nu$ be probability measures on $\mathbb{R}^d$ with finite
  second moment. If $\nu$ is the image or pushforward of $\mu$ by
  an affine map 
  \begin{equation}
    x\mapsto T(x)=Ax+h
  \end{equation}
  where $h$ is a
  vector of $\mathbb{R}^d$ and $A$ is a positive-semidefinite $d\times d$
  symmetric matrix, then
 \begin{equation}\label{eq:W2mom}
    \mathrm{W}_2^2(\mu,\nu)
    =\mathrm{Trace}((A-I)^2M_\mu)
    +2\langle(A-I)m_\mu,h\rangle+|h|^2
 \end{equation}
 where $m_\mu:=\int x\mathrm{d}\mu(x)$ and $M_\mu:=\int
 xx^\top\mathrm{d}\mu(x)$ are the first two moments of $\mu$. Alternatively, 
  \begin{equation}
   \mathrm{W}_2^2(\mu,\nu)=\mathrm{Tr}(\Sigma_\mu+\Sigma_\nu-2A\Sigma_\mu)+|m_\mu-m_\nu|^2,
  \end{equation}
 where $\Sigma_\mu$ and $\Sigma_\nu$ are the covariance matrices of $\mu$ and $\nu$, and
  where $m_\nu$ is the mean of $\nu$. 
\end{lemma}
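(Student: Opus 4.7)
The plan is to show that $T$ itself is the optimal transport map from $\mu$ to $\nu$, so that the Wasserstein cost collapses to a single moment integral. The key structural observation is that, because $A$ is symmetric and positive semidefinite, $T(x)=Ax+h$ is the gradient of the convex quadratic $\phi(x)=\tfrac12\langle Ax,x\rangle+\langle h,x\rangle$. Brenier's theorem, or, more robustly, Kantorovich duality applied to the explicit admissible pair $(\phi,\phi^*)$ with $\phi^*$ the Legendre transform (which bypasses any absolute continuity hypothesis on $\mu$, via the equality case of Young's inequality $\phi(x)+\phi^*(y)\geq\langle x,y\rangle$, saturated precisely when $y=\nabla\phi(x)=T(x)$), then implies that the coupling $(\mathrm{id},T)_\#\mu$ is optimal. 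Hence $\mathrm{W}_2^2(\mu,\nu)=\int|x-T(x)|^2\,d\mu(x)=\int|(I-A)x-h|^2\,d\mu(x)$.

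Expanding the square term by term and using $\int x\,d\mu=m_\mu$ and $\int xx^\top d\mu=M_\mu$, together with $(I-A)^2=(A-I)^2$, immediately gives the three summands of~\eqref{eq:W2mom}.

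To pass to the covariance formulation, I would substitute $M_\mu=\Sigma_\mu+m_\mu m_\mu^\top$ in the first formula and isolate the terms depending on $m_\mu$. The contribution quadratic in $m_\mu$ combines with the cross term $2\langle(A-I)m_\mu,h\rangle$ and the $|h|^2$ term into a perfect square $|(I-A)m_\mu-h|^2$, which equals $|m_\mu-m_\nu|^2$ since $m_\nu=Am_\mu+h$. The remaining trace $\mathrm{Tr}((A-I)^2\Sigma_\mu)$ expands as $\mathrm{Tr}(A^2\Sigma_\mu-2A\Sigma_\mu+\Sigma_\mu)$, and using $\Sigma_\nu=A\Sigma_\mu A$ together with the cyclicity of the trace (so that $\mathrm{Tr}(A^2\Sigma_\mu)=\mathrm{Tr}(A\Sigma_\mu A)=\mathrm{Tr}(\Sigma_\nu)$) rewrites it as $\mathrm{Tr}(\Sigma_\mu+\Sigma_\nu-2A\Sigma_\mu)$, which is the stated second form.

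The only genuinely non-trivial step is the identification of $T$ as the Brenier optimal map; once that is secured by the convexity of $\phi$, everything else is routine linear algebra on first and second moments. I do not anticipate a real obstacle, the hypothesis that $A$ is positive semidefinite being tailor-made for this purpose.
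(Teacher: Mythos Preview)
Your proof is correct and follows essentially the same route as the paper: identify $T$ as the gradient of a convex quadratic, invoke Brenier to conclude optimality, then expand $\int|x-T(x)|^2\,d\mu$ and rewrite in terms of covariances via $M_\mu=\Sigma_\mu+m_\mu m_\mu^\top$, $\Sigma_\nu=A\Sigma_\mu A$, and cyclicity of the trace. Your explicit appeal to Kantorovich duality with the pair $(\phi,\phi^*)$ is a nice touch that makes transparent why no absolute continuity assumption on $\mu$ is needed for the sufficiency direction; the paper simply asserts this consequence of Brenier's theorem without elaboration.
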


We use this lemma for Barenblatt profiles. 

\begin{proof}
  First of all, an affine map is the gradient of a convex function if and only if the matrix of the linear part is symmetric positive semidefinite.
  Second, the uniqueness in the Brenier theorem on optimal transportation \cite{MR1100809,MR1369395,MR1964483,MR3409718} states that if a transportation map is the gradient of a convex function, then it is the optimal transportation map.
  It remains to use the fact that
  \begin{equation}
   \mathrm{W}_2^2(\mu,\nu)
   =\int|T(x)-x|^2\mathrm{d}\mu(x)
   =\int(|(A-I)x|^2+2\langle(A-I)x,h\rangle+|h|^2)\mathrm{d}\mu(x).
  \end{equation}
  This gives \eqref{eq:W2mom}. To get \eqref{eq:W2sig}, we note that $m_\nu=Am_\mu+h$, $\Sigma_\mu=M_\mu+m_\mu m_\mu^\top$, $\Sigma_\nu=A\Sigma_\mu A$, thus
  \begin{equation}\label{eq:W2A}
   \mathrm{W}_2^2(\mu,\nu)=\mathrm{Tr}(\Sigma_\mu+\Sigma_\nu-2A\Sigma_\mu)+|m_\mu-m_\nu|^2.
  \end{equation}
  We note also by the way that $T(x)=A(x-m_\mu)+m_\nu$.
\end{proof}

It turns out that the location-scale family of a rotationally invariant probability distribution is parametrized by the mean and covariance. We speak about elliptic families or elliptic distributions. Basic examples are given by Gaussian distributions and more generally Barenblatt profiles (see also Remark \ref{rk:student} and Remark \ref{rk:spherical}). For elliptic families, the affine map can be expressed in terms of the covariance matrices, as expressed by the following lemma, that we give for the sake of completeness. The formula for the distance based on means and covariances is well known for Gaussians \cite{MR0752258,MR0745785}. 

\begin{lemma}[Wasserstein for elliptic families]\label{rk:wasell}
  Let $\eta$ be a rotationally invariant probability measure on $\mathbb{R}^d$,
  with zero mean and identity covariance matrix. Let 
  $\mu$ and $\nu$ be two probability measures in the location-scale family of $\eta$, namely
  images or pushforwards of $\eta$ by the affine maps
  \begin{equation}
    x\mapsto T_\mu(x):=m_\mu+\sqrt{\Sigma_\mu}x
    \quad\text{and}\quad
    x\mapsto T_\nu(x):=m_\nu+\sqrt{\Sigma_\nu}x,
  \end{equation}
  where $m_\mu,m_\nu\in\mathbb{R}^d$, and where $\Sigma_\mu$ and $\Sigma_\nu$ are $d\times d$ positive semidefinite symmetric
  matrices. Then $\mu$ and $\nu$ have mean $m_\mu$ and $m_\nu$ and covariance $\Sigma_\mu$ and $\Sigma_\nu$ respectively, and
  \begin{equation}\label{eq:W2sig}
    \mathrm{W}_2^2(\mu,\nu)
    =\mathrm{Tr}\Bigr(\Sigma_\mu+\Sigma_\nu-2\sqrt{\sqrt{\Sigma_\mu}\Sigma_\nu\sqrt{\Sigma_\mu}}\Bigr)+|m_\mu-m_\nu|^2.
  \end{equation}
  In particular, if the covariance matrices commute : $\Sigma_\mu\Sigma_\nu=\Sigma_\nu\Sigma_\mu$, then 
  \begin{equation}
    \mathrm{W}_2^2(\mu,\nu)
    =\mathrm{Tr}\bigr((\sqrt{\Sigma_\mu}-\sqrt{\Sigma_\nu})^2\bigr)+|m_\mu-m_\nu|^2.
 \end{equation}  
\end{lemma}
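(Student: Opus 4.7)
The plan is to reduce everything to Lemma \ref{le:wasposca} by exhibiting an explicit Brenier optimal transport map from $\mu$ to $\nu$ of the required affine form. First, the mean and covariance claims are immediate: if $X\sim\eta$, then $T_\mu(X)=m_\mu+\sqrt{\Sigma_\mu}\,X$ has mean $m_\mu$ (since $\eta$ is centered) and covariance $\sqrt{\Sigma_\mu}\cdot I\cdot\sqrt{\Sigma_\mu}=\Sigma_\mu$, and likewise for $\nu$. The key preliminary observation, used repeatedly below, is that because $\eta$ is rotationally invariant, the pushforward $B_\#\eta$ of $\eta$ by a $d\times d$ matrix $B$ depends on $B$ only through $BB^\top$: writing the polar decomposition $B=PU$ with $P=\sqrt{BB^\top}$ symmetric positive semidefinite and $U$ orthogonal, we get $B_\#\eta=P_\#(U_\#\eta)=P_\#\eta$.

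Assume first that $\Sigma_\mu$ is invertible; the degenerate case follows by perturbing $\Sigma_\mu$ to $\Sigma_\mu+\varepsilon I$ and letting $\varepsilon\downarrow 0$, using the joint continuity in $(m,\Sigma)$ within an elliptic family. Define the symmetric positive semidefinite matrix
\begin{equation*}
A:=\sqrt{\Sigma_\mu}^{-1}\sqrt{\sqrt{\Sigma_\mu}\,\Sigma_\nu\sqrt{\Sigma_\mu}}\,\sqrt{\Sigma_\mu}^{-1},
\end{equation*}
(Bures\,--\,Wasserstein choice) and the affine map $T(x):=A(x-m_\mu)+m_\nu$. A direct computation yields $A\Sigma_\mu A=\Sigma_\nu$. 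Now $T_\#\mu=(T\circ T_\mu)_\#\eta$ where $T\circ T_\mu$ is the affine map $x\mapsto A\sqrt{\Sigma_\mu}\,x+m_\nu$. Applying the preliminary observation to $B:=A\sqrt{\Sigma_\mu}$ and to $B':=\sqrt{\Sigma_\nu}$, and noting that $BB^\top=A\Sigma_\mu A=\Sigma_\nu=B'(B')^\top$, we conclude $T_\#\mu=\nu$.

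Since $A$ is symmetric positive semidefinite and $T_\#\mu=\nu$, Lemma \ref{le:wasposca} applies with $h=m_\nu-Am_\mu$ and gives
\begin{equation*}
\mathrm{W}_2^2(\mu,\nu)=\mathrm{Tr}\bigl(\Sigma_\mu+\Sigma_\nu-2A\Sigma_\mu\bigr)+|m_\mu-m_\nu|^2.
\end{equation*}
The cyclic property of the trace, applied to $A\Sigma_\mu=\sqrt{\Sigma_\mu}^{-1}\sqrt{\sqrt{\Sigma_\mu}\Sigma_\nu\sqrt{\Sigma_\mu}}\,\sqrt{\Sigma_\mu}$, simplifies to $\mathrm{Tr}(A\Sigma_\mu)=\mathrm{Tr}\sqrt{\sqrt{\Sigma_\mu}\Sigma_\nu\sqrt{\Sigma_\mu}}$, yielding the announced formula \eqref{eq:W2sig}. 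For the commuting case, if $\Sigma_\mu\Sigma_\nu=\Sigma_\nu\Sigma_\mu$ then the symmetric psd matrices $\sqrt{\Sigma_\mu}$ and $\sqrt{\Sigma_\nu}$ are simultaneously diagonalizable and hence also commute, so $\sqrt{\sqrt{\Sigma_\mu}\Sigma_\nu\sqrt{\Sigma_\mu}}=\sqrt{\Sigma_\mu}\sqrt{\Sigma_\nu}$, and the identity $(\sqrt{\Sigma_\mu}-\sqrt{\Sigma_\nu})^2=\Sigma_\mu+\Sigma_\nu-2\sqrt{\Sigma_\mu}\sqrt{\Sigma_\nu}$ gives the last formula.

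The main conceptual obstacle is the construction of the optimal affine transport $T$ between $\mu$ and $\nu$: the naive candidate $x\mapsto\sqrt{\Sigma_\nu}\sqrt{\Sigma_\mu}^{-1}(x-m_\mu)+m_\nu$ does produce the correct mean and covariance but its linear part is in general not symmetric, so it is not the gradient of a convex function and Brenier's characterization used in Lemma \ref{le:wasposca} fails. The rotational invariance of $\eta$ is exactly what lets us absorb the missing orthogonal factor: it reduces the identity $T_\#\mu=\nu$ to the matrix equation $A\Sigma_\mu A=\Sigma_\nu$, which is solved by the symmetric psd Bures\,--\,Wasserstein formula for $A$.
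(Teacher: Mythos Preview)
Your proof is correct and follows essentially the same route as the paper's: the same rotational-invariance observation (pushforward depends only on $BB^\top$), the same Bures\,--\,Wasserstein matrix $A$, the same verification $A\Sigma_\mu A=\Sigma_\nu$, and the same appeal to Lemma~\ref{le:wasposca} followed by the cyclic trace property. You are slightly more careful than the paper in two places: you explicitly justify the rotational-invariance step via polar decomposition, and you note the need for $\Sigma_\mu$ to be invertible (handling the degenerate case by perturbation), whereas the paper writes $\sqrt{\Sigma_\mu}^{-1}$ without comment; you also spell out the commuting-case reduction, which the paper leaves implicit.
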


\begin{proof}
  The rotational invariance of $\eta$ implies that its image by an affine map $x\mapsto m+Cx$ depends on $C$ only via its covariance $CC^\top$. It follows that if we show that the image of $\eta$ by an affine map has same mean and covariance as $\nu$, then it is equal to $\nu$. Let us consider the affine map $x\mapsto T(x):=A(x-m_\mu)+m_\nu$ where $A$ is the positive semidefinite symmetric matrix
  \begin{equation}\label{eq:A}
      A=\sqrt{\Sigma_\mu}^{-1}\sqrt{\sqrt{\Sigma_\mu}\Sigma_\nu\sqrt{\Sigma_\mu}}\sqrt{\Sigma_\mu}^{-1}.
  \end{equation}
  Now the image of $\eta$ by the affine map $T\circ T_\mu$ is nothing else but $\nu$, because the matrix $C:=A\sqrt{\Sigma_\mu}$ satisfies $CC^\top=A\Sigma_\mu A=\Sigma_\nu$. As a consequence, the image of $\mu$ by the affine map $T$ is $\nu$.  
  Finally \eqref{eq:W2sig} follows from the formula \eqref{eq:W2A} of Lemma \ref{le:wasposca} by using the cyclic property of the trace.
\end{proof}

\section{Full-space Barenblatt and proof of Theorem \ref{th:main} (fast diffusion case)}

This section is devoted to the proof of Theorem \ref{th:main} in the fast diffusion case $\alpha>1/2$.

Let us start with a general lemma concerning the second moment and the $\mathrm{L}^m$ norm of the Barenblatt profile. In what follows $\Gamma(z)=\int_0^\infty t^{z-1}\mathrm{e}^{-t}\mathrm{d}t$ denotes the usual Euler Gamma function, defined for any $z\in\mathbb{C}$ such that $\mathrm{Re}(z)>0$.
\begin{lemma}[Full-space Barenblatt profile]\label{le:Bm<1}
 The full-space Barenblatt probability density function in dimension $d\geq1$ and with shape parameter $p>\frac{d}{2}$ and scale parameter $b>0$ is given by
 \begin{equation}\label{eq:Bxc}
   B(x):=\frac{1}{{(c+b|x|^2)}^p},\quad x\in\mathbb{R}^d,\quad\text{where}\quad
   c:=\Bigr(\frac{\pi^{\frac{d}{2}}\Gamma(p-\frac{d}{2})}{b^{\frac{d}{2}}\Gamma(p)}\Bigr)^{\frac{2}{2p-d}}.
 \end{equation}
 Moreover, we also have
 \begin{align}
   \int_{\mathbb{R}^d}|x|^aB(x)\mathrm{d}x
   &=\Bigr(\frac{\pi^{\frac{d}{2}}\Gamma(p-\frac{d}{2})}{b^{p}\Gamma(p)}\Bigr)^{\frac{a}{2p-d}}
   \frac{\Gamma(\frac{d+a}{2})\Gamma(p-\frac{d+a}{2})}{\Gamma(\frac{d}{2})\Gamma(p-\frac{d}{2})},
   \quad\text{for all $a>d-2p$}\label{eq:Bac}\\
   \int_{\mathbb{R}^d}B(x)^m\mathrm{d}x&=\Bigr(\frac{\pi}{b}\Bigr)^{\frac{dp(1-m)}{2p-d}}
    \Bigr(\frac{\Gamma(p)}{\Gamma(p-\frac{d}{2})}\Bigr)^{\frac{2pm-d}{2p-d}}
    \frac{\Gamma\left (pm-\frac{d}{2}\right)}{\Gamma(pm)},\quad\text{for all $p>\frac{d}{2m}$}\label{eq:Bm}.
 \end{align}
 Furthermore, if we fix $\alpha\in\left(\frac{1}{2},\infty\right)$ and set $m=\frac{\alpha(d-2)+1}{\alpha d}$, $p=\frac{1}{1-m}=\frac{d\alpha}{2\alpha - 1}$, $b=\frac{2\alpha-1}{2(\alpha(d-2)+1)}$, then   
 \begin{equation}\label{eq:M2sim}
\int_{\mathbb{R}^d}|x|^2 B(x)\mathrm{d}x =(2\pi\mathrm{e})^{2\alpha-1} d + o_{d\rightarrow\infty} (d)\quad\mbox{and}\quad \int_{\mathbb{R}^d}B(x)^m\mathrm{d}x=\left(2\pi\mathrm{e}\right)^{2\alpha-1} + o_{d\rightarrow\infty}(1)
\end{equation}
Furthermore: 
\begin{equation}\label{difference-fde}
    \int_{\mathbb{R}^d}|x|^2 B(x)\mathrm{d}x - d \int_{\mathbb{R}^d}B(x)^m\mathrm{d}x = O_{d\to\infty}(1)
\end{equation}
\end{lemma}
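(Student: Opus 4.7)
\emph{Plan.} The three closed-form expressions \eqref{eq:Bxc}, \eqref{eq:Bac}, \eqref{eq:Bm} follow from a routine polar-coordinate reduction to a Beta integral. The asymptotics \eqref{eq:M2sim} then come from specializing the parameters and applying Stirling's formula, while the identity \eqref{difference-fde} will in fact turn out to hold exactly, not only up to an $O(1)$ term.

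For the closed-form part, passing to polar coordinates with $|\mathbb{S}^{d-1}| = 2\pi^{d/2}/\Gamma(d/2)$ reduces each integral to the radial form $\int_0^\infty r^{a+d-1}(c+br^2)^{-q}\,dr$; the substitution $t = br^2/c$ turns this into $\tfrac{1}{2}(c/b)^{(a+d)/2}c^{-q}$ times the classical Beta integral $B((a+d)/2,\,q-(a+d)/2)$, provided $q > (a+d)/2$. Imposing $\int B = 1$ and solving for $c$ yields \eqref{eq:Bxc}; inserting an extra factor $r^a$ gives \eqref{eq:Bac}; and running the same computation with $q = pm$, then using \eqref{eq:Bxc} to re-express the remaining power of $c$, gives \eqref{eq:Bm}.

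For the asymptotics \eqref{eq:M2sim}, I would first specialize. Writing $\beta := 1/(2\alpha-1)$, one has $p-d/2 = d\beta/2$, $2p-d = d\beta$, and crucially $pm = p-1$ (from $m = 1-1/p$). The recursion $\Gamma(z+1)=z\Gamma(z)$ collapses the Gamma ratios appearing in \eqref{eq:Bac} (with $a=2$) and \eqref{eq:Bm}, while the explicit form of $c$ in \eqref{eq:Bxc} absorbs the outer power $(\pi^{d/2}\Gamma(p-d/2)/(b^{p}\Gamma(p)))^{2/(2p-d)}$, so that after a short computation
\[
\int_{\mathbb{R}^d}|x|^2 B(x)\,\mathrm{d}x = \frac{c}{b}\cdot\frac{d}{d\beta-2}, \qquad \int_{\mathbb{R}^d} B(x)^m\,\mathrm{d}x = c\cdot\frac{d\alpha\beta-1}{d\beta/2-1}.
\]
Everything then hinges on the asymptotic of $c$. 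Taking logarithms in \eqref{eq:Bxc} and inserting Stirling's expansion $\log\Gamma(z) = (z-\tfrac{1}{2})\log z - z + \tfrac{1}{2}\log(2\pi) + o(1)$ into $\log\Gamma(d\beta/2) - \log\Gamma(d\alpha\beta)$, the $d\log d$ contributions from the two Gammas cancel against each other and against the $\log d$ coming from $\log(1/b)$ (since $1/b \sim 2\alpha\beta\,d$); what remains, after careful bookkeeping, collects into $\log c \to (2\alpha-1)\log(2\pi e) - \log(2\alpha)$, i.e. $c \to (2\pi e)^{2\alpha-1}/(2\alpha)$. Substituting back in the two displayed formulas yields \eqref{eq:M2sim}.

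Finally, \eqref{difference-fde} is really the exact identity $\int |x|^2 B\,\mathrm{d}x = d\int B^m\,\mathrm{d}x$: comparing the two displayed formulas, it reduces to $1/b = 2(d\alpha\beta-1)$, which is an immediate algebraic consequence of $d\alpha\beta - 1 = (\alpha(d-2)+1)/(2\alpha-1)$ and the definition of $b$. The main obstacle in the whole proof is the Stirling analysis for $c$: the leading $d\log d$ and $d$ contributions must cancel with exactly the right constants to produce the base $2\pi e$, and any slip in the Stirling correction $\tfrac12\log(2\pi)$ or in the $(z-\tfrac12)\log z$ subleading term would produce a wrong prefactor.
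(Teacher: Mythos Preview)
Your argument is correct and, for the closed-form part, essentially identical to the paper's: both reduce to a Beta integral in polar coordinates, fix $c$ by normalization, and obtain \eqref{eq:Bm} by replacing $p$ with $pm$.

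For the asymptotics you take a slightly different route. The paper plugs the special parameters directly into \eqref{eq:Bac} and \eqref{eq:Bm}, obtaining closed expressions for $M_2$ and $N_m$ involving the ratio $\bigl(\Gamma(\tfrac{d}{2(2\alpha-1)})/\Gamma(\tfrac{d\alpha}{2\alpha-1})\bigr)^{2(2\alpha-1)/d}$, and then applies Stirling to that ratio. You instead factor everything through $c$, deriving the clean intermediate identities $M_2=\tfrac{c}{b}\cdot\tfrac{d}{d\beta-2}$ and $N_m=c\cdot\tfrac{d\alpha\beta-1}{d\beta/2-1}$, and do the Stirling work once, on $c$ alone. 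Both approaches are equivalent in difficulty (a warning: your $\beta=1/(2\alpha-1)$ is the reciprocal of the paper's $\beta=2\alpha-1$), but your packaging makes the structure more transparent.

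Your treatment of \eqref{difference-fde} is in fact sharper than the paper's. The paper expands $b=\tfrac{2\alpha-1}{2\alpha d}+O(d^{-2})$ and tracks the remainder to conclude $M_2-dN_m=O(1)$. You observe instead that the identity $1/b=2(d\alpha\beta-1)$ holds \emph{exactly}, so that $M_2=dN_m$ on the nose for every $d$. This is correct: with the exact $b=\tfrac{2\alpha-1}{2(\alpha(d-2)+1)}$ one has $b\cdot\tfrac{2(\alpha(d-2)+1)}{2\alpha-1}=1$, which is precisely the cancellation. So your argument both simplifies and strengthens this step.
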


\begin{proof}
As in \cite{MR3103175}, by using spherical coordinates, the fact that the surface of the unit sphere of $\mathbb{R}^d$, $d\geq1$, is $2\pi^{\frac{d}{2}}/\Gamma(\frac{d}{2})$, and an Euler Beta integral, we get, for all $d\geq1$, $p>\frac{d}{2}$, $a>d-2p$, $b>0$, $c>0$, 
\begin{equation}\label{eq:Ba}
  M_a:=
   \int_{\mathbb{R}^d}|x|^aB(x)\mathrm{d}x
   =\frac{2\pi^{\frac{d}{2}}}{\Gamma(\frac{d}{2})}
    \left(\frac{c}{b}\right)^{\frac{d+a}{2}}
    \frac{\Gamma(\frac{d+a}{2})\Gamma\left (p-\frac{d+a}{2}\right)}{2c^p\Gamma(p)}.
\end{equation}
In particular $B$ is a probability density function if and only if $M_0=1$, namely $c$ is as in \eqref{eq:Bxc}, which gives \eqref{eq:Bac}. 
Finally, we get \eqref{eq:Bm} by using $M_0$ with $pm$ instead of $p$ and the $c$ associated to $p$.
Furthermore, under the current assumptions on $p$, $a$ and $b$, we find that
\begin{equation}\label{eq:Momentsfast}
    M_2= \frac{\pi^{2\alpha-1}}{b^{2\alpha}}\left(\frac{\Gamma(\frac{d}{2(2\alpha-1)})}{\Gamma(\frac{d}{2}\frac{2\alpha}{(2\alpha-1)})}\right)^\frac{2(2\alpha-1)}{d} \frac{d(2\alpha-1)}{d+2-4\alpha}
\end{equation}
where we have used the property $\Gamma(z+1)=z\Gamma(z)$. By noticing that $b^{-2\alpha}=(\alpha2)^{2\alpha}d^{2\alpha}(2\alpha-1)^{-2\alpha}+o_{d\rightarrow\infty}(d^{2\alpha})$ and by using the Stirling formula ($\Gamma(z)\sim \sqrt{2\pi}z^{z-1/2}\mathrm{e}^{-z}$ as $z\rightarrow\infty$ or the following version $\Gamma(sz)^\frac{1}{z}\sim \left(\frac{sz}{e}\right)^s$ which holds for any $s>0$)
we find that
\begin{equation}
     \int_{\mathbb{R}^d}|x|^2 B(x)\mathrm{d}x =(2\pi\mathrm{e})^{2\alpha-1} d + o_{d\rightarrow\infty}(d)\,.
\end{equation}
Similarly, we obtain the $\mathrm{L}^m$-norm of the Barenblatt profile:
\begin{equation}\label{eq:Bmnorm}\begin{split}
N_m:=\int_{\mathbb{R}^d}B(x)^m\mathrm{d}x & =  \left(\frac{\pi}{b}\right)^{(2\alpha-1)}\left(\frac{\Gamma(\frac{d}{2(2\alpha-1)})}{\Gamma(\frac{d}{2}\frac{2\alpha}{(2\alpha-1)})}\right)^\frac{2(2\alpha-1)}{d}\frac{2\alpha(d-2)+2}{d+2-4\alpha} \\
& = \left(2\pi\mathrm{e}\right)^{2\alpha-1} + o_{d\rightarrow\infty}(1)
\end{split}\end{equation}
At the same time, we notice that, by improving a little our estimate on $b=\frac{\beta}{2\alpha d}+O_{d\to\infty}(d^{-2})$, where $\beta=2\alpha-1$ we find that
\begin{equation}\label{last-limit}
\begin{split}
M_2-dN_m&= \frac{M_2}{\beta}\left(\beta -b\frac{2\alpha(d-2)+2}{\beta}\right) \\
&=\frac{M_2}{\beta}\left(\beta-(2\alpha d - 2\beta)\left(\frac{\beta}{2\alpha d}+O_{d\rightarrow\infty}(d^{-2})\right)\right) =\frac{M_2}{\beta} O_{d\to\infty}(d^{-1})\,,
\end{split}
\end{equation}
and we find relation~\eqref{difference-fde} since $M_2=O_{d\to\infty}(d)$ as $d\to\infty$.
\end{proof}

\begin{proof}[Proof of Theorem \ref{th:main} when $\alpha>1/2$ (fast diffusion case)]

The density $v_\infty$ has a finite second moment if and only if $m>\frac{d}{d+2}$, so from now on we assume that $m>\frac{d}{d+2}$. We shall prove the result under the assumption $d\rightarrow\infty$ with $\alpha=\frac{1}{2-d(1-m)}$ being constant. We notice that, in such a limit, $m\rightarrow1$, however, $m$ may follow completely different paths.

Let us begin with the Wasserstein distance.
From \eqref{eq:FDEFPv} and \eqref{eq:vinf}, the solution $v(t,\cdot)$, seen as a probability density, is the image of the scaled Barenblatt profile $v_\infty(\cdot)$, seen as a probability density, by the affine map $T(x)=ax+h$ with 
\begin{equation}
    a=a(t):=(1-\mathrm{e}^{-\frac{t}{\alpha}})^\alpha
    \quad\text{and}\quad 
    h=h(t):=\mathrm{e}^{-t}x_0.
\end{equation}
Therefore by Lemma \ref{le:wasposca}, the computation of the distance boils down to the first two moments: 
\begin{equation}
    \mathrm{W}_2^2(v(t,\cdot),v_\infty(\cdot))
    =(a-1)^2m_2+2(a-1)\langle m_1,h\rangle+|h|^2
\end{equation}
where
\begin{equation}
    m_1:=\int xv_\infty(x)\mathrm{d}x  =0\quad\mbox{and}\quad m_2:=\int|x|^2v_\infty(x)\mathrm{d}x
\end{equation}
Fix $\alpha \in\left(\frac{1}{2}, \infty\right)$, by setting $m=\frac{\alpha(d-2)+1}{\alpha d}$, $p=\frac{1}{1-m}=\frac{d\alpha}{2\alpha - 1}$ and $b=\frac{2\alpha-1}{2(\alpha(d-2)+1)}$ we have that $m_2=M_2$ where $M_2$ is as in~\eqref{eq:Momentsfast}. 
Hence,
\begin{equation} \label{eq:w2_exact_m>1}
    \mathrm{W}_2^2(v(t,\cdot),v_\infty(\cdot))
    =(a-1)^2M_2+|h|^2
    =M_2((1-\mathrm{e}^{-\frac{t}{\alpha}})^\alpha-1)^2+|x_0|^2\mathrm{e}^{-2t}.
\end{equation}
By taking into account formulas~\eqref{eq:M2sim} and by expanding $a(t)$, one finds that
\begin{equation}\label{final-inequality-w2}
    \begin{split}
\mathrm{W}_2^2(v(t,\cdot),v_\infty(\cdot)) &= \left((2\pi\mathrm{e})^{2\alpha-1}d+o_{d\rightarrow\infty}(d)\right)\left(-\alpha \mathrm{e}^{-\frac{t}{\alpha}}+o_{t\rightarrow\infty}(\mathrm{e}^{-\frac{1}{\alpha}t})\right)^2 + |x_0|^2\mathrm{e}^{-2t} \\
&=\alpha^2\,(2\pi\mathrm{e})^{2\alpha-1}d  \mathrm{e}^{-\frac{2}{\alpha}t} + |x_0|^2\mathrm{e}^{-2t} + do_{t\rightarrow\infty}(\mathrm{e}^{-\frac{2}{\alpha}t}) + o_{d\rightarrow\infty}(d)o_{t\rightarrow\infty}(\mathrm{e}^{-\frac{2}{\alpha}t}) \\ 
& + \mathrm{e}^{-\frac{2}{\alpha}t}o_{d\to\infty}(d)\,.
    \end{split}
\end{equation}
Let $\varepsilon>0$, and set $t_d=\frac{\max(1,\alpha)}{2}(1-\varepsilon)\log(d)$. 
Let us first consider the case $\alpha \geq 1$. By a straightforward computation find that  $\alpha^2\,(2\pi\mathrm{e})^{2\alpha-1}d  \mathrm{e}^{-\frac{2}{\alpha}t_d}$ diverges to $+\infty$. Since the other terms in~\eqref{final-inequality-w2} (excluding $|x_0|^2 \mathrm{e}^{-2t_d}$ which may give a positive contribution) are of a smaller order, this is enough to prove the upper bound on the cutoff for $W_2^2$. 
On the other hand, if $\alpha < 1$, it is  $\sup_{|x_0| \leq r\sqrt{d}} |x_0|^2 \mathrm{e}^{-2t_d}$ which diverges to $+\infty$, and provides the upper bound. 

Consider now instead $t_d=\frac{\max(1,\alpha)}{2}(1+\varepsilon)\log(d)$, in such a case both the term $\alpha^2\,(2\pi\mathrm{e})^{2\alpha-1}d  \mathrm{e}^{-\frac{2}{\alpha}t_d}$ and $|x_0|^2 \mathrm{e}^{-2t_d}$ converge to zero as $d\rightarrow\infty$. This proves the cutoff for $W_2^2$, since the other terms in~\eqref{final-inequality-w2} are of a smaller order.

Let us now consider $\mathrm{H}_m$ defined in~\eqref{eq:tsalis}. Assume $\alpha$, $m$, $p$ and $b$ as before, the following simple computations
\begin{equation}\label{needed.computations}
\int_{\mathbb{R}^d} v^m(t)\mathrm{d}x =  a^\frac{2\alpha-1}{\alpha} N_m \quad\mbox{and}\quad
\int_{\mathbb{R}^d} |x|^2v(t)\mathrm{d}x= |h|^2 + a^2\, M_2\,,
\end{equation}
show that
\begin{equation*}
 \mathrm{H}_m(v(t)\mid v_\infty) =\frac{\alpha d}{2\alpha-1}\left(1-a^\frac{2\alpha-1}{\alpha}\right) 
 N_m +\frac{a^2-1}{2}M_2 + \frac{\mathrm{e}^{-2t}|x_0|^2}{2}\,,
\end{equation*}
where $M_2$ is as before and $N_m$ defined in~\eqref{eq:Bmnorm}. By expanding $a(t)$ at the second order, and taking into account the behaviour~\eqref{difference-fde},  we find that
\begin{equation}\label{inequality-hm}
\begin{split}
     \mathrm{H}_m(v(t)\mid v_\infty)& =\frac{\alpha}{2} (2\pi\mathrm{e})^{2\alpha-1}d \mathrm{e}^{-\frac{2}{\alpha}t} + \frac{|x_0| \mathrm{e}^{-2t}}{2}+  do_{t\rightarrow\infty}(\mathrm{e}^{-\frac{2}{\alpha}t}) \\ 
     & \quad + o_{d\rightarrow\infty}(d)o_{t\rightarrow\infty}(\mathrm{e}^{-\frac{2}{\alpha}t})  + O_{d\to\infty}(1) \mathrm{e}^{-\frac{t}{\alpha}}\,.
\end{split}
\end{equation}
We notice that the terms of largest order in~\eqref{inequality-hm} are the same as in~\eqref{final-inequality-w2}, therefore we can conclude in the same way as for the $W_2^2$ distance.

Let us consider the Fisher Information defined in~\eqref{eq:fisher}. By identity~\eqref{eq:vinf} we have that
\begin{equation*}
    v^{m-1}(t,x)=a^{d(1-m)}\left(C+\frac{1-m}{2m}\left|\frac{x-h}{a}\right|^2\right)\quad\mbox{and}\quad \frac{m}{m-1}\nabla v^{m-1} =-a^{-\frac{1}{\alpha}}\left(x-h\right)
\end{equation*}
Therefore, by taking into account~\eqref{needed.computations} and that $\int_{\mathbb{R}^d} x v(t,x) d\mathrm{d}x = h$, we have
\begin{equation*}
    \begin{split}
         \mathrm{I}_m(v(t)\mid v_\infty) & =\int_{\mathbb{R}^d}v(t,x) \left|x-a^{-\frac{1}{\alpha}}(x-h)\right|^2 \mathrm{d}x 
          \\ & = (1-a^{-\frac{1}{\alpha}})^2\left(|h|^2 + a^2\, M_2\right) + |h|^2\,a^{-\frac{2}{\alpha}} +2 |h|^2\,a^{-\frac{1}{\alpha}} (1-a^{-\frac{1}{\alpha}})  \\
          & = |h|^2 + a^2 (1-a^{-\frac{1}{\alpha}})^2 M_2
    \end{split}
\end{equation*}
 By expanding $a(t)$ at the second order and taking into account~\eqref{eq:M2sim} we get
 \begin{equation*}
      \mathrm{I}_m(v(t)\mid v_\infty) = (2\pi\mathrm{e})^{2\alpha-1}d \mathrm{e}^{-\frac{2}{\alpha}t} + |x_0|^2 \mathrm{e}^{-2t} +  do_{t\rightarrow\infty}(\mathrm{e}^{-\frac{2}{\alpha}t}) + o_{d\rightarrow\infty}(d)o_{t\rightarrow\infty}(\mathrm{e}^{-\frac{2}{\alpha}t})\,.
 \end{equation*}
Since the expansion of $\mathrm{I}_m(v(t)\mid v_\infty)$ is very similar to the two distances used before, we can conclude in the same way. 
\end{proof}

\begin{remark}[Full-space Barenblatt profile and multivariate Student t-distribution]\label{rk:student}
In Statistics, the multivariate Student t-distribution is the law of the random vector of $\mathbb{R}^d$
\begin{equation}
X:=x_0+\frac{Y}{\sqrt{\frac{Z}{r}} }
\quad\text{where}\quad
\begin{cases}
 Y&\sim\mathcal{N}(0,\Sigma)\\
 Z&\sim\chi^2(r)=\mathrm{Gamma}(\frac{r}{2},\frac{1}{2})
 \end{cases}
 \quad\text{are independent}.
\end{equation} 
Here $x_0\in\mathbb{R}^d$ is a vector called the position, $r>0$ is a positive
real parameter called the degree of freedom, and $\Sigma$ is an $d\times d$
positive-definite symmetric matrix. The probability density function is
\begin{equation}
    x\in\mathbb{R}^d\mapsto
    \frac{C}{\bigr(1+\frac{1}{r}(x-x_0)^\top\Sigma^{-1}(x-x_0)\bigr)^{\frac{r+d}{2}}},
    \quad
    C=\frac{\Gamma(\frac{r+d}{2})}
    {\Gamma(\frac{r}{2})\sqrt{\det(r\pi\Sigma)}}.
\end{equation}
It has a mean (respectively covariance) if and only if $r>1$ (respectively\ $r>2$), given respectively by 
\begin{equation}\label{eq:x0Sigma}
x_0\quad \text{and}\quad \frac{r}{r-2}\Sigma.
\end{equation}
The case $r=1$ is also known as a multivariate Cauchy distribution. At fixed $d$, the multivariate Student t-distribution tends as $r\to\infty$ to $\mathcal{N}(0,\Sigma)$, thanks to the law of large numbers and the Slutsky lemma. The law of $X$ is rotationally invariant, and the real random variable $\frac{1}{r}|X-x_0|^2$ follows a Fisher\,--\,Snedecor F-distribution. In the isotropic case $\Sigma=\sigma^2\mathrm{id}_d$ with $\sigma^2>0$, the density becomes
\begin{equation}
      \frac{C}{\bigr(1+\frac{1}{r\sigma^2}\left|x-x_0\right|^2\bigr)^{\frac{r+d}{2}}}
    = \frac{1}{(C^{-\frac{2}{r+d}}+C^{-\frac{2}{r+d}}\frac{1}{r\sigma^2}\left|x-x_0\right|^2)^{\frac{r+d}{2}}},
    \quad
    C=\frac{\Gamma(\frac{r+d}{2})}
    {\Gamma(\frac{r}{2})\sqrt{r^d\pi^d\sigma^{2d}}},
\end{equation}
which is a Barenblatt profile. In view of \eqref{eq:sol}, we have $\frac{r+d}{2}=\frac{1}{1-m}$, hence $r=\frac{1}{\alpha(1-m)}$ where $\alpha:=\frac{1}{2-d(1-m)}$, and we choose $\sigma^2$ in such a way that $C^{-\frac{2}{r+d}}\frac{1}{r\sigma^2}=\alpha\frac{1-m}{2m}$. In particular, when $m=\frac{d-1}{d}$, then $r=d$, and since $M_2=\mathrm{Tr}(\frac{d}{d-2}\sigma^2I_d)=\frac{d^2}{d-2}\sigma^2$, we get $\sigma^2\sim_{d\to\infty}2\pi\mathrm{e}$ from \eqref{eq:M2sim}.
\end{remark}

\section{Compactly supported Barenblatt and proof of Theorems \ref{th:main} (porous medium case) and \ref{th:fixed_exp}}

This section is devoted to the proof of Theorem \ref{th:main} in the porous medium case $0<\alpha<1/2$, which shall be followed by the proof of Theorem \ref{th:fixed_exp}.

\begin{lemma}[Compactly supported Barenblatt pdf]\label{le:Bm>1}
 The compactly supported Barenblatt probability density function in dimension $d\geq1$ and with shape parameter $p>0$ and scale parameter $b>0$ is 
 \begin{equation}\label{eq:CBxc}
   B(x):=(c-b|x|^2)_+^p,\quad x\in\mathbb{R}^d,\quad\text{where}\quad
   c:=\left(\frac{b^{d/2}\Gamma\left(p + 1 + \frac{d}{2}\right)}{\pi^{d/2}\Gamma(p+1)}\right)^{2/(2p+d)}.
 \end{equation}
 We also have 
 \begin{equation} \label{eq:CBac}
   \int_{\mathbb{R}^d}|x|^aB(x)\mathrm{d}x
   =\left(\frac{\Gamma\left(p+1+\frac{d}{2}\right)}{\pi^{d/2}b^p\Gamma(p+1)}\right)^{a/(d+2p)}\frac{\Gamma\left(p+1+\frac{d}{2}\right)\Gamma\left(\frac{d+a}{2}\right)}{\Gamma\left(\frac{d}{2}\right)\Gamma\left(p+1+\frac{d+a}{2}\right)},
   \quad\text{for all $a\geq0$}.
 \end{equation}
 and
 \begin{equation} \label{eq:Bmpme}
     \int_{\mathbb{R}^d}B^m(x)\mathrm{d}x =  \frac{\Gamma(p+1+\frac{d}{2})}{\Gamma(pm+1+\frac{d}{2})} \frac{\Gamma(pm+1)}{\Gamma(p+1)} \left(\frac{b^\frac{d}{2}}{\pi^\frac{d}{2}} \frac{\Gamma(p+1+\frac{d}{2})}{\Gamma(p+1)}\right)^\frac{2p(m-1)}{2p+d}\quad\text{for all $m\geq0$}.
 \end{equation}
If we fix $\alpha\in\left(0, \frac{1}{2}\right)$, and set $m=1+\frac{1-2\alpha}{d\alpha}$, $p=\frac{1}{m-1}$, $b = \frac{m-1}{2m}$, then
\begin{equation}\label{limit-alpha-constant-pme}
    \int_{\mathbb{R}^d}|x|^2B(x)\mathrm{d}x = \frac{d}{\left(2\pi\mathrm{e}\right)^{1-2\alpha}} + o_{d\to\infty}(d)\quad\mbox{and}\quad \int_{\mathbb{R}^d}B^m(x)\mathrm{d}x = \frac{1}{\left(2\pi\mathrm{e}\right)^{1-2\alpha}}+o_{d\to\infty}(1)\,.
\end{equation}
Furthermore, under the same assumptions
\begin{equation}\label{difference-pme}
    \int_{\mathbb{R}^d}|x|^2 B(x)\mathrm{d}x - d \int_{\mathbb{R}^d}B(x)^m\mathrm{d}x = O_{d\to\infty}(1)
\end{equation}
\end{lemma}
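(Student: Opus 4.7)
The plan is to mirror the proof of Lemma \ref{le:Bm<1}, now integrating in spherical coordinates over the compact support $\{|x|\le\sqrt{c/b}\}$. Using the area $2\pi^{d/2}/\Gamma(d/2)$ of the unit sphere in $\mathbb{R}^d$ and the substitution $u=br^2/c$ on the radial variable, the Euler Beta integral computation gives, for any $a\ge 0$,
\[
\int_{\mathbb{R}^d}|x|^a B(x)\,\mathrm{d}x = \frac{\pi^{d/2}}{\Gamma(d/2)}\Bigl(\frac{c}{b}\Bigr)^{(a+d)/2}c^p\,\frac{\Gamma((a+d)/2)\,\Gamma(p+1)}{\Gamma(p+1+(a+d)/2)}.
\]
Specializing to $a=0$ and imposing $\int B=1$ fixes $c$ as in \eqref{eq:CBxc}; substituting this value back into the display then produces \eqref{eq:CBac}. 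For the $\mathrm{L}^m$-norm, observe that $B^m=(c-b|x|^2)_+^{pm}$ is proportional to an unnormalized Barenblatt density of shape $pm$, so rerunning the same calculation with $pm$ in place of $p$ (and without the normalization constraint) gives \eqref{eq:Bmpme} directly.

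Under $p=1/(m-1)=d\alpha/(1-2\alpha)$ and $b=(m-1)/(2m)$, the identity $pm=p+1$ simplifies the Gamma factors in \eqref{eq:Bmpme}. Combined with the $a=2$ case of \eqref{eq:CBac}, this yields
\[
M_2=\frac{d/2}{p+1+d/2}\Bigl(\frac{\Gamma(p+1+d/2)}{\pi^{d/2}\Gamma(p+1)}\Bigr)^{2/(2p+d)}b^{-2p/(2p+d)},
\]
\[
N_m=\frac{p+1}{p+1+d/2}\Bigl(\frac{\Gamma(p+1+d/2)}{\pi^{d/2}\Gamma(p+1)}\Bigr)^{2/(2p+d)}b^{d/(2p+d)}.
\]
Since $2p+d=d/(1-2\alpha)$, the exponents become $2p/(2p+d)=2\alpha$, $d/(2p+d)=1-2\alpha$, and $2/(2p+d)=2(1-2\alpha)/d$. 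Applying the Stirling asymptotic $\Gamma(sz)^{1/z}\sim(sz/\mathrm{e})^s$ already used in Lemma \ref{le:Bm<1} to the ratio $\Gamma(p+1+d/2)/\Gamma(p+1)$ with $z=d$, and combining with $b^{-2\alpha}\sim(2d\alpha/(1-2\alpha))^{2\alpha}$ and the rational prefactors, one arrives at the two limits in \eqref{limit-alpha-constant-pme}.

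The only subtle step is the second-order estimate \eqref{difference-pme}, where two quantities of size $d$ must cancel up to $O(1)$. Forming the ratio of the two displayed expressions, the $b$-exponents combine as $b^{d/(2p+d)+2p/(2p+d)}=b$, and the Gamma and $\pi$ factors cancel, leaving $dN_m/M_2=2b(p+1)$. With $b=(m-1)/(2m)$ and $p+1=m/(m-1)$ this product is identically $1$, so in fact $M_2=dN_m$ exactly, which is strictly stronger than \eqref{difference-pme}. If one prefers to avoid spotting this algebraic identity, the main obstacle would be instead to expand $b=(1-2\alpha)/(2d\alpha)+O(d^{-2})$ and track the cancellation as in \eqref{last-limit} of the fast-diffusion case; either route concludes the proof.
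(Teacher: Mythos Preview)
Your proof is correct and follows essentially the same route as the paper for the exact formulas \eqref{eq:CBxc}--\eqref{eq:Bmpme} and for the asymptotics \eqref{limit-alpha-constant-pme}: spherical coordinates plus an Euler Beta integral, normalization to fix $c$, and Stirling's formula for the large-$d$ behaviour.

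The one genuine difference is in the treatment of \eqref{difference-pme}. The paper writes $M_2-dN_m=(M_2/\beta)\bigl(\beta-b(2\alpha d+2\beta)\bigr)$, then substitutes the expansion $b=\beta/(2d\alpha)+O(d^{-2})$ and concludes only $O(1)$. You instead compute the ratio $dN_m/M_2=2b(p+1)$ and observe that with $b=(m-1)/(2m)$ and $p+1=m/(m-1)$ this equals $1$ identically, so that $M_2=dN_m$ exactly. This is both simpler and strictly stronger. In fact the paper's own expression vanishes exactly too, since $b=\beta/(2(d\alpha+\beta))$ gives $b(2d\alpha+2\beta)=\beta$ with no error term; the authors simply do not notice this and settle for the asymptotic bound. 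Your alternative remark about expanding $b$ is precisely what the paper does.
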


\begin{proof}
As in the proof of Lemma~\ref{le:Bm<1}, by an Euler-Beta integral, we get, for all $d\ge1$, $p>0$, $a\ge0$, $b>0$ and $c>0$,
\begin{align}\label{eq:Baporous}
M_a:=\int{|x|^aB(x)\mathrm{d}x} &= c^{p + (d+a)/2}b^{-(d+a)/2}\frac{\pi^{d/2}}{\Gamma(d/2)}\int_0^1{(1-u)^p u^{(d+a)/2 - 1}du} \nonumber \\
&= \left(\frac{c}{b}\right)^{(a+d)/2}\frac{\Gamma(p+1)\Gamma((d+a)/2)}{\Gamma(d/2)\Gamma(p+1 + (d+a)/2)}c^p\pi^{d/2},
\end{align}
where we have used that the surface of the unit sphere of $\mathbb{R}^d$, $d\ge1$ is $2\pi^\frac{d}{2}/\Gamma(\frac{d}{2})$ and the fact that 
\[
\int_0^1 (1-t)^kt^l\mathrm{d}t=\frac{\Gamma(k+1)\Gamma(l+1)}{\Gamma(k+l+2)}.
\]
We notice that $B$ is a probability density if and only if $M_0=1$, which determines the value of $c$ given in~\eqref{eq:CBxc}. The value of $M_a$ in~\eqref{eq:CBac} is then computed accordingly to the determined value of $c$. Finally, we get~\eqref{eq:Bmpme}, by using $M_0$ with $pm$ instead of $p$ and with the value of $c$ given in~\eqref{eq:CBxc}.

Let us now consider the limit for $d\rightarrow\infty$ with $\alpha$ being constant, that is, we obtain the formulas~\eqref{limit-alpha-constant-pme}. Under such assumptions, we have that $m\to1$ as $d\to\infty$. We notice that the previous assumptions give us $\alpha=\frac{1}{2+d(m-1)}$. Therefore, by setting $\beta=1-2\alpha>0$, we have
\begin{equation}\label{m2-pme}    M_2 = \pi^{-d(m-1)\alpha} b^{-2\alpha} \left(\frac{\Gamma(1+\frac{d}{2\beta})}{\Gamma(1+\frac{d\alpha}{\beta})}\right)^{\frac{2\beta}{d}}\, \frac{\beta d}{d+2\beta}\,.
\end{equation}
By noticing that $d\alpha(m-1)=\beta$, $b^{-2\alpha}=\beta^{-2\alpha}(2\alpha d)^{2\alpha}+o_{d\rightarrow\infty}(d^{2\alpha})$ and
by using the following relation $\Gamma(1+sz)^\frac{1}{z}\sim_{z\rightarrow\infty} (\frac{zs}{e})^s$ which holds for any $s>0$, we find the left-hand side of~\eqref{limit-alpha-constant-pme}. Consider now
\begin{equation}\label{nm-pme}
N_m:=\int_{\mathbb{R}^d}B^m(x)\mathrm{d}x = \left(\frac{b}{\pi}\right)^{d(m-1)\alpha} \,\left(\frac{\Gamma(1+\frac{d}{2\beta})}{\Gamma(1+\frac{d\alpha}{\beta})}\right)^{\frac{2\beta}{d}}\,\frac{2d\alpha+2\beta}{d+2\beta} , 
\end{equation}
by a similar computation as before, one easily finds the right-hand side of~\eqref{limit-alpha-constant-pme}.

Let us now prove relation~\eqref{difference-pme}. Let us recall that $b=\frac{\beta}{2d\alpha}+O_{d\to\infty}(d^{-2})$. We have, therefore: 
\begin{equation}
\begin{split}
M_2-dN_m& =\frac{M_2}{\beta}\left(\beta-b(2\alpha d+2\beta)\right) \\
& = \frac{M_2}{\beta} \left(\beta - \beta + O_{d\to\infty}(d^{-1})\right)= \frac{M_2}{\beta}O_{d\to\infty}(d^{-1})
\end{split}
\end{equation}
and we find relation~\eqref{difference-pme} since $M_2=O_{d\to\infty}(d)$ as $d\to\infty$.
\end{proof}

We also need the asymptotic for fixed $m$ to prove Theorem \ref{th:fixed_exp}. 

\begin{lemma} \label{lem:fixed.m}
    With the same notation as in Lemma \ref{le:Bm>1}, but fixing $m > 1$ and setting $\alpha = \frac{1}{2-d(1-m)}$, as well as $p=\frac{1}{m-1}$ and $b =\frac{m-1}{2m}$, we have as $d$ goes to infinity

    \begin{equation}
    c \sim \frac{bd}{2\pi\mathrm{e}},\quad
    \int_{\mathbb{R}^d}|x|^2B(x)\mathrm{d}x \sim \frac{d}{2\pi\mathrm{e}},\quad\text{and}\quad
    \int_{\mathbb{R}^d}B^m(x)\mathrm{d}x \sim \frac{1}{2\pi\mathrm{e}}.
    \end{equation}
\end{lemma}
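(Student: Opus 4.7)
The plan is to derive all three asymptotics directly from the closed-form expressions established in Lemma \ref{le:Bm>1}. The new feature compared with the earlier computation is that with $m>1$ now fixed, the parameters $p=1/(m-1)$ and $b=(m-1)/(2m)$ are \emph{constants} independent of $d$, whereas in \eqref{limit-alpha-constant-pme} they scaled with $d$ through $\alpha$. Because the three claimed asymptotics are tightly linked (via $M_2 \approx c/b$ and $N_m$ expressible through $c$), the main work lies in a careful Stirling estimate of $c$.

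First I would handle $c$ by rewriting \eqref{eq:CBxc} as
\begin{equation*}
c = \left(\frac{b}{\pi}\right)^{d/(2p+d)} \left(\frac{\Gamma(p+1+d/2)}{\Gamma(p+1)}\right)^{2/(2p+d)}.
\end{equation*}
Since $d/(2p+d)\to 1$ and $\Gamma(p+1)$ is a fixed constant raised to an exponent going to $0$, the first factor tends to $b/\pi$. For the Gamma factor, Stirling gives $\log\Gamma(p+1+d/2) = (d/2)\log(d/2) - d/2 + O(\log d)$, so that multiplying by $2/(2p+d) = 2/d + O(d^{-2})$ yields
\begin{equation*}
\log\!\left[\Gamma(p+1+d/2)^{2/(2p+d)}\right] = \log(d/2) - 1 + o(1),
\end{equation*}
i.e.\ $\Gamma(p+1+d/2)^{2/(2p+d)} \sim d/(2e)$. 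Combining these two gives $c\sim bd/(2\pi e)$. For the second moment, I apply \eqref{eq:CBac} with $a=2$ and use $\Gamma((d+2)/2)=(d/2)\Gamma(d/2)$ together with $\Gamma(p+1+(d+2)/2)=(p+1+d/2)\Gamma(p+1+d/2)$; combined with $M_0=1$, this reduces to
\begin{equation*}
M_2 = \frac{c}{b}\cdot\frac{d/2}{p+1+d/2} \sim \frac{c}{b} \sim \frac{d}{2\pi e}.
\end{equation*}

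Finally, for $N_m=\int B^m$, the key algebraic simplification is $pm = p+1$, a direct consequence of $p=1/(m-1)$. Substituting this into \eqref{eq:Bmpme}, recognizing the bracketed factor as precisely $c$ (by the definition \eqref{eq:CBxc}), and telescoping $\Gamma(p+2+d/2)=(p+1+d/2)\Gamma(p+1+d/2)$ and $\Gamma(p+2)=(p+1)\Gamma(p+1)$ yields
\begin{equation*}
N_m = c \cdot \frac{p+1}{p+1+d/2} \sim \frac{2c(p+1)}{d} \sim \frac{b(p+1)}{\pi e}.
\end{equation*}
The direct substitution $b(p+1) = \tfrac{m-1}{2m}\cdot\tfrac{m}{m-1} = \tfrac{1}{2}$ closes the argument, giving $N_m\sim 1/(2\pi e)$. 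The main obstacle in the whole plan is the Stirling asymptotics for $c$: one must check that the subleading $O(\log d)$ terms in $\log\Gamma(p+1+d/2)$ contribute only $o(1)$ once multiplied by $2/(2p+d)=O(1/d)$, which is routine. Once $c$ is pinned down, $M_2$ and $N_m$ follow from elementary Gamma-ratio identities.
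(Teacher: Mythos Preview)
Your proof is correct and follows essentially the same approach as the paper, which merely sketches the argument in one sentence: use the exact formulas \eqref{eq:CBxc}, \eqref{eq:CBac}, \eqref{eq:Bmpme}, Stirling's formula, and the asymptotic $\Gamma(x+t)\sim\Gamma(x)x^t$ for fixed $t$. Your derivation fills in the details cleanly; in particular, the observations $pm=p+1$ and that the bracketed factor in \eqref{eq:Bmpme} raised to $2/(2p+d)$ is exactly $c$ are the key simplifications, and your verification that the $O(\log d)$ terms in Stirling contribute only $o(1)$ after multiplication by $2/(2p+d)$ is precisely what is needed.
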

In particular, for fixed $m$, as $d$ goes to infinity the invariant Barenblatt profile has support on a ball of radius of order $\sqrt{d}$. The proof is obtained by using the exact formulas \eqref{eq:CBac} and \eqref{eq:Bmpme}, Stirling's formula and the formula $\Gamma(x+t) \equiv \Gamma(x)x^t$ for fixed $t$, as $x$ goes to $+\infty$. They are compatible with Lemma \ref{le:Bm>1} when $\alpha \rightarrow 0$.

\begin{proof}[Proof of Theorem \ref{th:main} when $0<\alpha<1/2$ (porous medium case)]
Let us consider the case of $\alpha$ constant and $d\rightarrow\infty$. Consider first the Wasserstein distance, as in the proof of the fast diffusion case, the solution $v(t,\cdot)$, seen as a probability density, is the image of the scaled Barenblatt profile $v_\infty(\cdot)$, seen as a probability density, by the affine map $T(x)=ax+h$ with 
\begin{equation}
    a=a(t):=(1-\mathrm{e}^{-\frac{t}{\alpha}})^\alpha
    \quad\text{and}\quad 
    h=h(t):=\mathrm{e}^{-t}x_0.
\end{equation}
Therefore by Lemma \ref{le:wasposca}, the computation of the distance boils down to the first two moments, and, as in the proof for the fast diffusion case, we find 
\begin{equation} \label{eq:W2_exact_mgeq1}
    \mathrm{W}_2^2(v(t,\cdot),v_\infty(\cdot))
    =(a-1)^2M_2+|h|^2\,,
\end{equation}
where $M_2$ is as in~\eqref{m2-pme}. By developing $a(t)$ at the first order and by taking into account~\eqref{limit-alpha-constant-pme} we find
\[
\begin{split}
\mathrm{W}_2^2(v(t,\cdot),v_\infty(\cdot))
    & = \frac{\alpha^2}{(2\pi\mathrm{e})^{1-2\alpha}}\, d\, \mathrm{e}^{-\frac{2}{\alpha}t} + |x_0|^2 \mathrm{e}^{-2t}  \\
    & + d o_{t\to\infty}(\mathrm{e}^{-\frac{2}{\alpha}t})  + \mathrm{e}^{-\frac{2}{\alpha}t}o_{d\to\infty}(d) + o_{t\to\infty}(\mathrm{e}^{-\frac{2}{\alpha}t})o_{d\to\infty}(d)\,.
\end{split}
\]
The proof of formulas~\eqref{eq:m>1:W2} for the Wasserstein distance $W_2^2$ is as in the proof of the fast diffusion case, with $|x_0|^2 \mathrm{e}^{-2t}$ being the dominant term. Let us consider the the relative entropy  $\mathrm{H}_m$ defined in~\eqref{eq:tsalis}. 
As in the proof of the fast diffusion case, we find that
\[
\mathrm{H}_m(v(t)\mid v_\infty)= \frac{\alpha\,d}{1-2\alpha} \left(a^\frac{2\alpha-1}{\alpha}-1\right)N_m +\frac{a^2-1}{2}M_2 + \frac{|h|^2}{2}\,.
\]
By considering formulas~\eqref{limit-alpha-constant-pme} and~\eqref{difference-pme},  by expanding $a(t)$ at the second order, we obtain
\begin{equation*}
\begin{split}
     \mathrm{H}_m(v(t)\mid v_\infty)& =\frac{\alpha}{2}\,\frac{d}{(2\pi\mathrm{e})^{1-2\alpha}} \mathrm{e}^{-\frac{2}{\alpha}t} + \frac{|x_0|^2 \mathrm{e}^{-2t}}{2}+  do_{t\rightarrow\infty}(\mathrm{e}^{-\frac{2}{\alpha}t}) \\ 
     & \quad + o_{d\rightarrow\infty}(d)o_{t\rightarrow\infty}(\mathrm{e}^{-\frac{2}{\alpha}t})  + O_{d\to\infty}(1) \mathrm{e}^{-\frac{t}{\alpha}}\,,
\end{split}
\end{equation*}
from which we conclude as in the fast diffusion case. 

Let us consider the Fisher Information defined in~\eqref{eq:fisher}, the only difference here with respect to the fast diffusion case is the compact support of $v$. By identities~\eqref{eq:FDEFPv} and~\eqref{eq:vinf}, we obtain
\begin{equation*}
    v^{m-1}(t,x)=a^{d(1-m)}\left(C-\frac{m-1}{2m}\left|\frac{x-h}{a}\right|^2\right)_{+}\quad\mbox{and}\quad \frac{m}{m-1}\nabla v^{m-1} =-a^{-\frac{1}{\alpha}}\left(x-h\right) \mathbf{1}_{\{v>0\}}\,,
\end{equation*}
where $\mathbf{1}_A$ is the characteristic function of the set $A\subset \mathbb{R}^d$. Proceeding as for the fast diffusion case, we find
\begin{equation*}
    \begin{split}
         \mathrm{I}_m(v(t)\mid v_\infty) & =\int_{\mathbb{R}^d}v(t,x) \left|\left(x-a^{-\frac{1}{\alpha}}(x-h)\right) \mathbf{1}_{\{v>0\}} + x\mathbf{1}_{\{v=0\}} \right|^2 \mathrm{d}x 
          \\ & = (1-a^{-\frac{1}{\alpha}})^2\left(|h|^2 + a^2\, M_2\right) + |h|^2\,a^{-\frac{2}{\alpha}} +2 |h|^2\,a^{-\frac{1}{\alpha}} (1-a^{-\frac{1}{\alpha}})  \\
          & = |h|^2 + a^2 (1-a^{-\frac{1}{\alpha}})^2 M_2\,,
    \end{split}
\end{equation*}
where we have used the following relations
\begin{equation*}
\begin{split}
\int_{\mathbb{R}^d} x\,v(t,x) \mathrm{d}x = h\,\quad&\mbox{and} \\ &\int_{\mathbb{R}^d} |x|^2 v(t,x) \mathbf{1}_{\{v=0\}} \mathrm{d}x = \int_{\mathbb{R}^d} v(t,x) \, x\cdot \left(x-a^{-\frac{1}{\alpha}}(x-h)\right) \mathbf{1}_{\{v>0\}}\mathbf{1}_{\{v=0\}} \mathrm{d}x =   0
\end{split}
\end{equation*}
By expanding $a(t)$ at the first order and taking into account~\eqref{limit-alpha-constant-pme} we get
 \begin{equation*}
      \mathrm{I}_m(v(t)\mid v_\infty) = \frac{d\,\mathrm{e}^{-\frac{2}{\alpha}t}}{(2\pi\mathrm{e})^{1-2\alpha}} + |x_0|^2 \mathrm{e}^{-2t} +  do_{t\rightarrow\infty}(\mathrm{e}^{-\frac{2}{\alpha}t}) + o_{d\rightarrow\infty}(d)o_{t\rightarrow\infty}(\mathrm{e}^{-\frac{2}{\alpha}t})\,,
 \end{equation*}
and we conclude as previously.
\end{proof}

\begin{proof}[Proof of Theorem \ref{th:fixed_exp}]
    Let us start with Wasserstein distance $W_2$. As in \eqref{eq:W2_exact_mgeq1}, we have
    \begin{equation} \label{eq:aux:fixed:m}
     \mathrm{W}_2^2(v(t,\cdot),v_\infty(\cdot))
    =M_2((1-\mathrm{e}^{-\frac{t}{\alpha}})^\alpha-1)^2+|x_0|^2\mathrm{e}^{-2t}.
    \end{equation}
    Since the first term is nonnegative, the lower bound for cutoff immediately follows. For the upper bound, recalling that 
    \[
    \alpha = \frac{1}{2-d(1-m)} \sim \frac{1}{(m-1)d},
    \]
    we have for $t_d$ going to infinity with $d$ (noting that $t_d/\alpha$ also does)
    \[
    (1-\mathrm{e}^{-\frac{t_d}{\alpha}})^\alpha \sim \exp\left(-\alpha\mathrm{e}^{-t_d/\alpha}\right)
    \quad\text{so that}\quad
    \left((1-\mathrm{e}^{-\frac{t}{\alpha}})^\alpha-1\right)^2 \sim \frac{\mathrm{e}^{-2(m-1)d t_d}}{(m-1)^2d^2}.
    \]
    Since $M_2$ is of order $d$ by Lemma \ref{lem:fixed.m}, it is easy to check that when $t_d$ is of order $\log d$ and $|x_0|$ of order $\sqrt{d}$ it is the second term that dominates in \eqref{eq:aux:fixed:m}, and the upper bound in the cutoff phenomenon easily follows. 

    The proofs for entropy and Fisher information work in the same way : we start from the exact formulas, and there is always a term proportional to $|x_0|^2\mathrm{e}^{-2t}$ which dominates the other ones. 
\end{proof}

\begin{remark}[Compactly supported Barenblatt profile and projected spherical
  law]\label{rk:spherical}
  If $X$ is a random vector of $\mathbb{R}^d$, $d\geq2$, uniformly distributed
  on the centered sphere $\{x\in\mathbb{R}^d:|x|=R\}$ of radius $R>0$, then
  for all $1\leq n\leq d-1$, the law of the random vector $Y:=(X_1,\ldots,X_n)$
  of $\mathbb{R}^n$ has density
  \[
    y\in\mathbb{R}^n\mapsto
    C(R^2-|y|^2)_+^{\frac{d-n-2}{2}}
    =C(R-|y|)^{\frac{d-n-2}{2}}(R+|y|)^{\frac{d-n-2}{2}}\mathbf{1}_{\{|y|\leq R\}}
  \]
  where $C:=\frac{2}{R^{d-2}\mathrm{Beta}(\frac{n}{2},\frac{d-n}{2})}$. We
  recognize a compactly supported Barenblatt profile with shape parameter
  $p=\frac{d-n-2}{2}$, in other words a special radial or multivariate
  symmetric Beta distribution. When $n=1$, this is also known as the
  Funk\,--\,Hecke formula in Harmonic Analysis. We have $Y=\pi_n(X)$ where
  $\pi_n(x_1,\ldots,x_n)=(x_1,\ldots,x_n)$ is the projection of $\mathbb{R}^d$
  on the first $n$ coordinates. Note that if we use instead the stereographic
  projection, then we will end up with a radially symmetric distribution on
  $\mathbb{R}^{d-1}$ which is the deformation of a full space Barenblatt
  profile by a radial power weight. To make a link with the Gaussian construction in Remark \ref{rk:student}, if $Z$ follows the standard Gaussian distribution $\mathcal{N}(0,I_d)$ on $\mathbb{R}^d$ then $Y:=Z/|Z|$ follows the uniform distribution on the unit sphere of $\mathbb{R}^d$, and $R(Y_1,\ldots,Y_n)$ follows the compactly supported Barenblatt profile above. As in Remark \ref{rk:student}, here we divide a Gaussian vector $Z$ by $|Z|\sim\chi(d)$ but this time these two objects are not independent.
\end{remark}


\end{document}